\newcommand{\E}{\ensuremath{\mathcal{E}}}
\newcommand{\p}{\varphi}
\newcommand{\R}{\mathbb{R}}
\newcommand{\Rn}{\mathbb{R}^d}
\newcommand{\weight}[1]{\langle #1\rangle}
\newcommand{\ZuWeis}{\mathrel{\mathop:\!\!=}}
\newcommand{\D}{\mathcal{D}}
\newcommand{\La}{\mathcal{L}}
\newcommand{\F}{\mathcal{F}}
\newcommand{\K}{\mathcal{K}}
\newcommand{\bv}{\mathbf{v}}
\newcommand{\bw}{\mathbf{w}}
\newcommand{\boldf}{\mathbf{f}}
\newcommand{\tbj}{\widetilde{\mathbf{J}}}
\newcommand{\bpsi}{\boldsymbol{\psi}}
\newcommand{\loc}{\operatorname{loc}}
\newcommand{\uloc}{\operatorname{uloc}}
\newcommand{\N}{\ensuremath{\mathbb{N}}}
\newcommand{\Hone}{H^1_{(0)}}
\newcommand{\supp}{\operatorname{supp}}
\newcommand{\ol}[1]{\overline{#1}}
\newcommand{\sd}{\, d}
\newcommand{\pv}{\operatorname{p.v.}}
\newcommand{\eps}{\ensuremath{\varepsilon}}
\newcommand{\Hs}[1][1]{\ensuremath{H^{\protect #1}}}
\newcommand{\Lp}[1][2]{\ensuremath{L^{#1}}}
\newcommand*{\norm}[2]{\| #2 \|_{#1}}
\newcommand{\LL}{\mathcal{L}}
\newcommand{\dom}{\operatorname{dom}}
\newcommand{\Ltwoprod}[2]{\left( #1, #2 \right)_{L^2}}
\newcommand{\duality}[2]{\left\langle #1, #2 \right\rangle}
\newcommand{\longduality}[4]{\left\langle #2, #3 \right\rangle_{#1, #4}}
\newcommand{\no}{\mathbf{n}}
\newtheorem{definition}{Definition}[section]
\newtheorem{remark}[definition]{Remark}
\newtheorem{lemma}[definition]{Lemma}
\newtheorem{theorem}[definition]{Theorem}
\newtheorem{assumption}[definition]{Assumption}
\numberwithin{equation}{section}  
\newcommand{\changes}[1]{{ #1}}
\begin{document}
\begin{titlepage}
\title{Weak Solutions for a Diffuse Interface Model for Two-Phase Flows of 
Incompressible Fluids with Different Densities and Nonlocal Free Energies}
\author{  Helmut Abels\footnote{Fakult\"at f\"ur Mathematik,  
Universit\"at Regensburg,
93040 Regensburg,
Germany, e-mail: {\sf helmut.abels@mathematik.uni-regensburg.de}}\ \ and 
Yutaka Terasawa\footnote{Graduate School of Mathematics, Nagoya University, Furocho Chikusaku, Nagoya, 464-8602, Japan, e-mail: {\sf yutaka@math.nagoya-u.ac.jp}}
}
\date{}
\end{titlepage}
\maketitle
\begin{abstract}
  We consider a diffuse interface model for the flow of two viscous
 incompressible Newtonian fluids with different densities in a bounded domain in two and three space dimensions and prove existence of weak solutions for it. In contrast to earlier contributions, we study a model with a singular non-local free energy, which controls the $H^{\alpha/2}$-norm of the volume fraction. We show existence of weak solutions for large times with the aid of an implicit time discretization. 
\end{abstract}
\noindent{\bf Key words:} Two-phase flow, Navier-Stokes equation,
 diffuse interface model, mixtures of viscous fluids, Cahn-Hilliard equation, non-local operators

\noindent{\bf AMS-Classification:} 
Primary: 76T99; 
Secondary: 35Q30, 
35Q35, 
76D03, 
76D05, 
76D27, 
76D45 


\section{Introduction} \label{intro}

In this contribution, we consider a two-phase flow for incompressible fluids of different densities and different viscosities. The two fluids are assumed to be macroscopically immiscible and to be miscible in a thin interface region, i.e., we consider a diffuse interface model
(also called phase field model) for the two-phase flow. 
In contrast to sharp interface models, where the interface between the two fluids is a sufficiently smooth hypersurface, diffuse interface model can describe topological changes due to pinch off and droplet collision. 

There are several diffuse interface models for such two-phase flows. Firstly, in the case of matched densities, i.e.,  the densities of both fluids are assumed to be identical, there is a well-known model H, cf.\ Hohenberg and Halperin or Gurtin et al. \cite{HH77, GPV96}.
In the case that the fluid densities do not coincide there are different models. On one hand  Lowengrub and Truskinovsky \cite{LT98} derived
a quasi-incompressible model, where the  mean velocity field of the mixture is in general not
divergence free. On the other hand, Ding et al. \cite{DSS07} proposed a model
with a divergence free mean fluid velocities. But this model is not known to be thermodynamically 
consistent. In \changes{Abels}, Garcke and Gr\"un \cite{AGG11}
a thermodynamically consistent diffuse interface model for two-phase flow
with different densities and a divergence free mean velocity field was derived, which we call AGG model for short. 
The existence of weak solutions of the AGG model was shown in \cite{ADG13}. For analytic result in the case of matched densities, i.e., the model H, we refer to \cite{Abe09b} \changes{and \cite{GMT19}} and the reference given there. Existence of weak and strong solutions for \changes{a slight modification} of the model by Lowengrub and Truskinovsky was proven in \cite{Abe09a,LTModelShortTime}. 

Concerning the Cahn-Hilliard equation, Giacomin and Lebowitz \cite{GL97, GL98} observed that a physically more rigorous derivation leads to a nonlocal equation, which we call a nonlocal Cahn-Hilliard equation. 
There are two types of nonlocal Cahn-Hilliard equations. One is the equation where the second order differential operator in the equation for the chemical potential is replaced by a convolution operator with a sufficiently smooth even function. We call it a nonlocal Cahn-Hilliard equation with a regular kernel in the following. 
The other is one where the second order differential operator is replaced by a regional fractional Laplacian. We call it a nonlocal Cahn-Hilliard equation with a singular kernel, since the regional fractional Laplacian is defined by using singular kernel. The nonlocal Cahn-Hilliard equation with a regular kernel was analyzed in \cite{GZ03, GG14, GL98, LP11a, LP11b}. On the other hand, the nonlocal Cahn-Hilliard equation with a singular kernel was first analyzed in Abels, Bosia and Grasselli \cite{ABG15}, where they proved the existence and uniqueness of a weak solution of the nonlocal Cahn-Hilliard equation, its regularity properties and the existence of a (connected) global attractor. 

Concerning the nonlocal model H with a regular kernel, where the convective Cahn-Hilliard equation is replaced by the convective nonlocal Cahn-Hilliard equation with a regular kernel, first studies were done by \cite{CFG12, FG12a, FG12b}\changes{, see also \cite{FGGS19} and the references there for more recent results}. More recently, the nonlocal AGG model with a regular kernel,
 where the convective Cahn-Hilliard equation is replaced by the convective nonlocal Cahn-Hilliard equation with a regular kernel, was studied by Frigeri~\cite{F15} and he showed the existence of a weak solution for that model.
The method of the proof in \cite{F15} is based on the Faedo-Galerkin method of a suitably mollified system and the method of passing to the limit with two parameters tending to zero. The method is different from \cite{ADG13} which is based on implicit time discretization and a Leray-Schauder fixed point argument. 

In this contribution, we consider a nonlocal AGG model with a singular kernel, where a convective Cahn-Hilliard equation in the AGG model is replaced by a convective nonlocal Cahn-Hilliard equation with a singular kernel. Our aim is to prove the existence of a weak solution of such a system. 

In this contribution we consider existence of weak solutions of the following system, which couples a nonhomogeneous Navier-Stokes equation system with a nonlocal Cahn-Hilliard equation:
\begin{align}
 \partial_t (\rho \mathbf{v}) + \operatorname{div} ( \bv \otimes(\rho \bv + \tbj)) - \operatorname{div} (2 \eta(\varphi) D \bv)
  + \nabla p  
  & = \mu \nabla \varphi & \mbox{in } \, Q ,  \label{eq:1} 
\\
 \operatorname{div} \, \bv &= 0& \mbox{in } \, Q,  \label{eq:2} 
\\
 \partial_t \varphi + \bv \cdot \nabla \varphi &= \mbox{div}\left(m(\varphi) \nabla \mu \right)& \mbox{in } \, Q, \label{eq:3} 
\\
 \mu = \Psi'(\varphi)  &+\LL\varphi & \mbox{in } \, Q , \label{eq:4} 
\end{align}
where $\rho=\rho(\varphi):= \frac{\tilde{\rho}_1+\tilde{\rho}_2}2+ \frac{\tilde{\rho}_2-\tilde{\rho}_1}2\varphi $, $\tbj = -\frac{\tilde{\rho}_2 - \tilde{\rho}_1}{2} m(\varphi) \nabla \mu$, 
$Q=\Omega\times(0,\infty)$. We assume that $\Omega \subset \mathbb{R}^d$, $d=2,3$, is a bounded domain with $C^2$-boundary. Here and in the following $\mathbf{v}$, $p$, and $\rho$ are the (mean) velocity, the pressure and the
density of the mixture of the two fluids, respectively. Furthermore $\tilde{\rho}_j$, $j=1,2$, are the specific densities of the unmixed fluids, 
 $\varphi$ is the difference of the volume fractions of the two fluids, and $\mu$ is the chemical
potential related to $\varphi$. 
Moreover,  ${D}\mathbf{v}= \frac12(\nabla \mathbf{v} + \nabla \mathbf{v}^T)$,
$\eta(\varphi)>0$ is the viscosity of the fluid mixture, and $m(\varphi)>0$ is a mobility coefficient. \changes{The term $\tbj$ describes the mass flux, i.e., we have
\begin{equation*}
  \partial_t \rho = - \operatorname{div} \tbj. 
\end{equation*}
It is important to have the term with $\tbj$ in \eqref{eq:1} in order to obtain a thermodynamically consistent model, cf.\ \cite{AGG11} for the case with a local free energy.}

Finally,
 $\LL$ is defined as 
\begin{align}\label{eq:defnL}
    \LL u(x) &= \pv \int_{\Omega} (u(x)-u(y))k(x,y,x-y)dy\\\nonumber
    &=\lim_{\eps\to 0} \int_{\Omega\setminus B_\eps(x)} (u(x)-u(y))k(x,y,x-y)dy\qquad \text{for }x\in\Omega
\end{align}
for suitable $u\colon \Omega\to \R$. Here the kernel $k\colon \Rn\times \Rn\times (\Rn\setminus\{0\})\to \R$ is  assumed to be $(d+2)$-times continuously differentiable and to satisfy the conditions 
\begin{alignat}{2}
      k(x,y,z)&=k(y,x,-z)\,, \label{k-ass-one} \\
     |\partial_x^\beta\partial_y^\gamma\partial_z^\delta k(x,y,z)| &\leqslant
        C_{\beta,\gamma,\delta}|z|^{-d-\alpha-|\delta|} \, , \label{k-ass-two} \\
     c_0 |z|^{-d-\alpha} &\leqslant k(x,y,z)\leqslant C_0 |z|^{-d-\alpha} \,. \label{k-ass-three}
   \end{alignat}for all $x,y,z\in\Rn$, $z\neq 0$ and $\beta, \gamma, \delta\in\N_0^d$ with $|\beta|+|\gamma|+|\delta|\leqslant d+2$ and some constants $C_{\beta,\gamma,\delta}, c_0,C_0>0$. Here $\alpha$ is the order of the operator, cf.~\cite{AK07}). We restrict ourselves to the case $\alpha \in (1,2)$. If $\omega\in C^{d+2}_{b}(\Rn)$, then $k(x,y,z) = \omega(x,y) |z|^{-d-\alpha}$ is an example of a kernel satisfying the previous assumptions.

We add to our system  the boundary and initial conditions
\begin{alignat}{2}\label{eq:5}
 \bv|_{\partial \Omega} &= 0 &\qquad& \text{on}\ \partial\Omega\times (0,\infty), \\\label{eq:6}
 \partial_\no \mu|_{\partial \Omega} &= 0&& \text{on}\ \partial\Omega\times (0,\infty),  \\\label{eq:7}
 \left(\bv , \varphi \right)|_{t=0} &= \left( \bv_0 , \varphi_0 \right) &&\text{in}\ \Omega. 
\end{alignat}
Here $\partial_\no = \no\cdot \nabla$ and $\no$ denotes the exterior normal at $\partial\Omega$. We note that \eqref{eq:5} is the usual no-slip boundary condition for the velocity field and $\partial_\no \mu |_{\partial \Omega} = 0$ describes that there is no mass flux of the fluid components 
through the boundary. 
Furthermore we complete the system above by an additional boundary condition for $\varphi$, which will be part of the weak formulation, cf.\ Definition~\ref{defweaksolution} below.
If $\varphi$ is smooth enough (e.g.\, $\varphi(t)\in C^{1, \beta}(\overline{\Omega})$ for every $t\geq 0$) and $k$ fulfills suitable assumptions, then 
\begin{equation}\label{b.c.}
  \no_{x_{0}}\cdot   \nabla \varphi(x_{0})   = 0 \qquad \text{for all }x_0\in\partial\Omega
\end{equation}
where $\no_{x_{0}}$ depends on the interaction kernel $k$, cf. \cite[Theorem~6.1]{ABG15}, and $x_0\in\partial\Omega$.

The total energy of the system at time $t\geq 0$ is given by
\begin{align} \label{totalenergy}
 E_{\mbox{\footnotesize tot}}(\p,\bv)  
         = E_{\mbox{\footnotesize kin}}(\p,\bv) + E_{\mbox{\footnotesize free}}(\p)
\end{align}
where
\begin{equation*}
E_{\mbox{\footnotesize kin}}(\p,\bv)  = \int_\Omega \rho \frac{|\bv|^2}{2} \, dx,\qquad E_{\mbox{\footnotesize free}}(\p) = \int_\Omega \Psi(\p) \, dx  +  \frac12\E(\varphi,\varphi) 
\end{equation*}
are the kinetic energy and the free energy of the mixture, respectively, and
\begin{equation}\label{eq:DefnE}
    \E(u,v)= \int_\Omega\int_\Omega (u(x)-u(y))(v(x)-v(y))k(x,y,x-y)\sd x\sd y
\end{equation}
for all $u,v\in H^{\frac{\alpha}{2}}(\Omega)$ is the natural bilinear form associated to $\LL$, which will also be used to formulate the natural boundary condition for $\varphi$ weakly.
 Every sufficiently smooth solution of the system above satisfies the energy identity
\begin{equation*}
  \frac{d}{dt} E_{\mbox{\footnotesize tot}}(\p,\bv)= -\int_\Omega 2\eta(\varphi)|D\bv|^2\, dx - \int_\Omega m(\varphi)|\nabla \mu |^2\, dx
\end{equation*}
for all $t\geq 0$. This can be shown by testing \eqref{eq:1} with $\bv$, \eqref{eq:3} with $\mu$ and \eqref{eq:4} with $\partial_t \varphi$, where the product of $\LL \varphi$ and $\partial_t \varphi$ coincides with
\begin{equation*}
  \E(\varphi(t),\partial_t \varphi(t))
\end{equation*}
under \changes{the same natural boundary condition for $\varphi(t)$ as before, cf. \eqref{b.c.}}.

We consider
a class of singular free energies, 
which will be specified below and which includes the homogeneous free energy of the so-called regular solution models used by Cahn and Hilliard~\cite{CahnHilliard}:
\begin{equation}
\label{logpot}\changes{\Psi(\varphi) = \frac{\vartheta}2 \left((1+\varphi)\ln
(1+\varphi)+ (1-\varphi)\ln (1-\varphi)\right) - \frac{\vartheta_c}2 \varphi^2,\quad \varphi \in [-1,1]}
\end{equation}
where
$\changes{0<\vartheta<\vartheta_c}$. 
\changes{This choice of the free energies ensures that $\varphi(x,t)\in [-1,1]$ almost everywhere.} 
In order to deal with these terms we apply 
techniques, which were developed in Abels and Wilke~\cite{AW07} and extended to the present nonlocal Cahn-Hilliard equation in \cite{ABG15}.

Our proof of existence of a weak solution of \eqref{eq:1}-\eqref{eq:4}
together with a suitable initial and boundary condition follows closely the proof of the main result of \cite{ADG13}. The following are the main differences and difficulties of our paper compared with \cite{ADG13}.
Since we do not expect $H^1$-regularity in space for the volume fraction $\p$ of  a weak solution of our system, we should eliminate $\nabla \p$ from our weak formulation taking into account the incompressibility of $\bv$.
Implicit time discretization has to be constructed carefully, using a suitable
mollification of $\p$ and an addition of a small Laplacian term to the chemical potential equation taking into account of the lack of $H^1$-regularity in space of $\p$. While the arguments for the weak convergence of temporal interpolants of weak solutions of the time-discrete problem are similar to \cite{ADG13}, the function space used for the order parameter has less regularity in space 
since the nonlocal operator of order less than 2 is involved in the equation for the chemical potential. 
For the convergence of the singular term $\Psi'(\p)$, we employ the argument in \cite{ABG15}. The only difference is that we work in space-time domains directly. For the validity of the energy inequality, additional arguments using the equation of chemical potential and the fact that weak convergence together with norm convergence in uniformly convex Banach spaces imply strong convergence are needed.

The structure of the contribution is as follows: 
In Section~\ref{prelimi} we present some preliminaries, we fix notations and collect the
needed results on nonlocal operator. In Section~\ref{secexistence}, we define weak solutions
of our system and state our main result concerning the existence of weak solutions. In Section~\ref{sec:Implicit},  we define an implicit time discretization of our system and
show the existence of weak solutions of an associated time-discrete problem using the Leray-Schauder theorem. In Section~\ref{sec:proof}, we obtain compactness in time of temporal interpolants of the weak solutions of time-discrete problem and obtain weak solutions of our system as weak limits of a suitable subsequence.

\section{Preliminaries} \label{prelimi}

As usual  
$a\otimes b = (a_i b_j)_{i,j=1}^d$ for $a,b\in \R^d$  and $A_{\operatorname{sym}}= \frac12 (A+A^T)$ for  $A\in \R^{d\times d}$.
Moreover, 
\begin{equation*}
  \weight{f,g} \equiv \weight{f,g}_{X',X} = f(g), \qquad f\in X', g\in X
\end{equation*}
denotes the duality product, where $X$ is a Banach space and $X'$ is its duak.
We write $X\hookrightarrow \hookrightarrow Y$ if $X$ is compactly embedded into $Y$. 
For a Hilbert space $H$ its inner product is denoted by $(\cdot\,,\cdot )_H$.

\medskip

Let $M\subseteq \R^d$ be measurable. As usual 
$L^q(M)$, $1\leq q \leq \infty$, denotes the Lebesgue space, $\|.\|_q$ its norm and $(.\,,.)_{M}=(.\,,.)_{L^2(M)}$ its inner product if $q=2$.   
Furthermore $L^q(M;X)$ denotes the set of all $f\colon M\to X$ that are strongly measurable and
$q$-integrable functions/essentially bounded functions. Here $X$ is a Banach
space. If $M=(a,b)$, we denote these spaces for simplicity by $L^q(a,b;X)$ and $L^q(a,b)$.
 Recall that $f\colon [0,\infty)\to X$ belongs $L^q_{\loc}([0,\infty);X)$ if and only if $f\in L^q(0,T;X)$ for every $T>0$. 
Furthermore, $L^q_{\uloc}([0,\infty); X)$ is the \emph{uniformly
  local} variant of $L^q(0,\infty;X)$ consisting of all strongly measurable $f\colon
[0,\infty)\to X$ such that
\begin{equation*}
  \|f\|_{L^q_{\uloc}([0,\infty); X)}= \sup_{t\geq 0}\|f\|_{L^q(t,t+1;X)} <\infty.
\end{equation*}
If $T<\infty$, we define $L^q_{\uloc}([0,T); X) := L^q(0,T;X)$.

For a domain $\Omega \subset \R^d$,  $m\in \N_0$, $1\leq q\leq \infty$, the standard Sobolev space is denoted by
$W^m_q(\Omega)$. 
$W^m_{q,0}(\Omega)$  is the closure of $C^\infty_0(\Omega)$ in $W^m_q(\Omega)$,
$W^{-m}_q(\Omega)= (W^m_{q',0}(\Omega))'$, and $W^{-m}_{q,0}(\Omega)= (W^m_{q'}(\Omega))'$. 
 $H^s(\Omega)$ denotes the $L^2$-Bessel potential of order $s\geq 0$.

Let 
$
  f_\Omega = \frac1{|\Omega|}\int_\Omega f(x) \,dx
$
denote the mean value of $f\in L^1(\Omega)$. For $m\in\R$ we define
\begin{equation*}
  L^q_{(m)}(\Omega):=\{f\in L^q(\Omega):f_\Omega=m\}, \qquad 1\leq q\leq \infty.  
\end{equation*}
Then the orthogonal projection onto $L^2_{(0)}(\Omega)$ is given by 
\begin{align*}
 P_0 f:= f-f_\Omega= f-\frac1{|\Omega|}\int_\Omega f(x) \,dx\qquad \text{for all }f\in L^2(\Omega).
\end{align*}
For the following we denote
\begin{equation*}
 \Hone\equiv\Hone (\Omega)= H^1(\Omega)\cap L^2_{(0)}(\Omega), \qquad (c,d)_{\Hone(\Omega)} := (\nabla c,\nabla d)_{L^2(\Omega)}.  
\end{equation*}
Because of Poincar\'e's inequality, $\Hone(\Omega)$ is a Hilbert space. 
More generally, we define for $s \geq 0$
\begin{alignat*}{2}
\Hs[s]_{(0)} \equiv \Hs[s]_{(0)}(\Omega) &= \Hs[s](\Omega) \cap L^2_{(0)}(\Omega), &\quad \Hs[-s]_{(0)}(\Omega)&= (\Hs[s]_{(0)}(\Omega))', \\ 
\Hs[-s]_{0}(\Omega) &= (\Hs[\changes{s}](\Omega))',&\quad \Hs[-s](\Omega) &= (\Hs[s]_{0}(\Omega))'.
\end{alignat*}
Finally, $f\in \Hs[s]_{\loc}(\Omega)$ if and only if $f|_{\Omega'}\in H^s(\Omega')$ for every open and bounded subset $\Omega'$ with $\ol{\Omega'}\subset \Omega$.

\medskip

We denote by $L^2_\sigma(\Omega)$ is the closure of $C^\infty_{0,\sigma}(\Omega)$ in $L^2(\Omega)^d$, where 
$C^\infty_{0,\sigma}(\Omega)$ is the set of all
divergence free vector fields in $C^\infty_0(\Omega)^d$. The corresponding
Helmholtz projection, i.e., the $L^2$-orthogonal projection onto $L^2_\sigma(\Omega)$, is denoted by $P_\sigma$,
cf.\ e.g.\ Sohr \cite{Soh01}. 

\medskip

Let $I=[0,T]$ with $0<T< \infty$ or  $I=[0,\infty)$ if $T=\infty$ and let $X$ is a Banach
space. The Banach space of all bounded and continuous
$f\colon I\to X$ is denoted by $BC(I;X)$. It is equipped with the supremum norm. Moreover, $BUC(I;X)$ is defined as the
subspace of all bounded and uniformly continuous functions. Furthermore, $BC_w(I;X)$ is the set of all bounded and weakly
continuous $f\colon I\to X$. $C^\infty_0(0,T;X)$ denotes
the vector space of all smooth functions $f\colon (0,T)\to X$ with $\supp
f\subset\subset (0,T)$.
By definition $f\in W^1_p(0,T;X)$, $1\leq p <\infty$, if and only if $f,
\frac{df}{dt}\in L^p(0,T;X)$.
Furthermore, $W^1_{p,\uloc}([0,\infty);X)$ is defined by replacing $L^p(0,T;X)$ by $L^p_{\uloc}([0,\infty);X)$ 
and we set $H^1(0,T;X)= W^1_2(0,T;X)$ and $H^1_{\uloc}([0,\infty);X) := W^1_{2,\uloc}([0,\infty);X)$.
Finally, we note:
\begin{lemma}\label{lem:CwEmbedding}
  Let $X,Y$ be two Banach spaces such that $Y\hookrightarrow X$ and $X'\hookrightarrow Y'$ densely.
  Then $L^\infty(I;Y)\cap BUC(I;X) \hookrightarrow BC_w(I;Y)$.
\end{lemma}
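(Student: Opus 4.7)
The plan is: given $f \in L^\infty(I;Y) \cap BUC(I;X)$ with $M := \|f\|_{L^\infty(I;Y)}$, to produce a representative that is bounded and weakly continuous into $Y$. Let $N \subset I$ be a null set such that $\|f(t)\|_Y \leq M$ for every $t \in I \setminus N$.

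First I would upgrade the $Y$-bound from almost every $t$ to every $t \in I$ and simultaneously identify the value $f(t)$ inside $Y$. Given $t \in I$, pick $t_n \in I \setminus N$ with $t_n \to t$. Uniform continuity in $X$ yields $f(t_n) \to f(t)$ in $X$, while the sequence is bounded in $Y$. Invoking reflexivity of $Y$ (which holds in all the intended applications), a subsequence $f(t_{n_k}) \rightharpoonup g$ weakly in $Y$ with $\|g\|_Y \leq \liminf_k \|f(t_{n_k})\|_Y \leq M$. Consistency of the dualities under $Y \hookrightarrow X$ and $X' \hookrightarrow Y'$ then gives, for every $\varphi \in X' \subset Y'$,
\begin{equation*}
\langle \varphi, g\rangle_{Y',Y} \;=\; \lim_k \langle \varphi, f(t_{n_k})\rangle_{X',X} \;=\; \langle \varphi, f(t)\rangle_{X',X}.
\end{equation*}
Since $X'$ is dense in $Y'$, this forces $g$ and $f(t)$ to coincide (first as elements of $X$, but then $f(t) = g \in Y$), so $\|f(t)\|_Y \leq M$ pointwise on $I$.

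Next I would verify weak continuity. Fix $t \in I$ and $t_n \to t$. By the previous step, $\sup_n \|f(t_n)\|_Y \leq M$. For any test functional $\varphi \in X' \subset Y'$, the compatibility of the pairings combined with $f(t_n) \to f(t)$ in $X$ gives
\begin{equation*}
\langle \varphi, f(t_n)\rangle_{Y',Y} = \langle \varphi, f(t_n)\rangle_{X',X} \to \langle \varphi, f(t)\rangle_{X',X} = \langle \varphi, f(t)\rangle_{Y',Y}.
\end{equation*}
A standard $3\varepsilon$-argument, exploiting the density of $X'$ in $Y'$ and the uniform bound $\sup_n \|f(t_n) - f(t)\|_Y \leq 2M$, promotes this to arbitrary $\varphi \in Y'$, yielding $f(t_n) \rightharpoonup f(t)$ in $Y$. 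Together with the pointwise $Y$-bound this places $f$ in $BC_w(I;Y)$, with continuous embedding since $\|f\|_{BC_w(I;Y)} \leq \|f\|_{L^\infty(I;Y)}$.

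The main obstacle is the identification step: reconciling the strong $X$-limit $f(t)$ with a candidate weak $Y$-limit of the bounded sequence $\{f(t_n)\}_n$. This is exactly where both structural assumptions are used — the density of $X' \hookrightarrow Y'$ to pin down the limit via test functions in $X'$, and the compatibility of $Y \hookrightarrow X$ with the dual embedding so that the two duality brackets agree on $X' \times Y$. Once this identification is established, the remainder of the argument is routine approximation.
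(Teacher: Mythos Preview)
The paper does not give its own proof of this lemma; it simply refers the reader to \cite{Abe09a}. Your argument is the standard one and is essentially what one finds in that reference.

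One point worth flagging: you invoke reflexivity of $Y$ to extract a weakly convergent subsequence, and you are candid that this is not among the stated hypotheses. You are right that something of this sort is needed --- the argument you give only produces, from the density of $X'$ in $Y'$, an element of $Y''$ agreeing with $f(t)$ on $X'$, and identifying that element with a genuine element of $Y$ is exactly where weak sequential compactness of bounded sets in $Y$ enters. Since every application in the paper has $Y$ a Hilbert space (e.g.\ $Y=H^{\alpha/2}(\Omega)$ or $Y=L^2_\sigma(\Omega)$), this is harmless in context, and your acknowledgment of the extra assumption is appropriate. The remainder of your proof --- the $3\varepsilon$ promotion from $\varphi\in X'$ to arbitrary $\varphi\in Y'$ using the uniform $Y$-bound --- is clean and correct.
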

\noindent
For a proof see e.g. Abels \cite{Abe09a}.

\subsection{Properties of the Nonlocal Elliptic Operator $\mathcal{L}$}\label{S:nonlocal_operator}

In the following let $\E$ be defined as in \eqref{eq:DefnE}.
Assumptions~\eqref{k-ass-one}--\eqref{k-ass-three} yield that there are positive constants $c$ and $C$ such that
    \begin{equation*}
c\norm{H^{\frac{\alpha}{2}}(\Omega)}{u}^{2} \leqslant |\changes{u_{\Omega}}|^2 + \E(u,u) \leqslant  C \norm{H^{\frac{\alpha}{2}}(\Omega)}{u}^{2} \qquad \text{for all}\,u \in \Hs[\frac{\alpha}{2}](\Omega).
    \end{equation*}    
This implies that the following norm equivalences hold:
    \begin{alignat}{2}
        \mathcal{E}(u,u)&\sim \norm{\Hs[\frac{\alpha}{2}](\Omega)}{u}^2 &\qquad& \text{for all}\, u\in\Hs[\frac{\alpha}{2}]_{(0)}(\Omega),\\\label{eq:EquivNorm2}
        \mathcal{E}(u,u) + | \changes{u_{\Omega}} |^{2} &\sim \norm{\Hs[\frac{\alpha}{2}](\Omega)}{u}^2 &\qquad& \text{for all}\, u\in\Hs[\frac{\alpha}{2}](\Omega),
    \end{alignat}
cf.\ \cite[Lemma 2.4 and Corollary 2.5]{ABG15}.

In the following we will use a variational extension of the nonlocal linear operator $\mathcal{L}$ (see~\eqref{eq:defnL}) by defining $\mathcal{L} \colon \Hs[\frac{\alpha}{2}](\Omega) \to \Hs[-\frac{\alpha}{2}]_0(\Omega)$ as
\begin{equation*}
    \longduality{\Hs[-\frac{\alpha}{2}]_0}{\mathcal{L}u}{\varphi}{\Hs[\frac{\alpha}{2}]} = \mathcal{E}(u, \varphi) \quad \text{for all $\varphi \in \Hs[\frac{\alpha}{2}](\Omega)$}.
\end{equation*}
This implies
\begin{equation*}
    \duality{\mathcal{L}u}{1} = \mathcal{E}(u, 1) = 0.
\end{equation*}
 We note that $\mathcal{L}$ agrees with~\eqref{eq:defnL} as soon as $u \in \Hs[\alpha]_{\loc}(\Omega)\cap \Hs[\frac{\alpha}2](\Omega)$ and $\varphi\in C_0^\infty(\Omega)$, cf. \cite[Lemma 4.2]{AK07}. But this weak formulation also includes a natural boundary condition for $u$, cf. \cite[Theorem~6.1]{ABG15} for a discussion.

We will also need the following regularity result, which essentially states that the operator $\mathcal{L}$ is of lower order with respect to the usual Laplace operator. This result is from \cite[Lemma 2.6]{ABG15}.
\begin{lemma}\label{L:regularity_regularization}
    Let $g \in \Lp_{(0)}(\Omega)$ and $\theta>0$. Then the unique solution 
    $u \in \Hs_{(0)}(\Omega) $ for the problem
    \begin{equation}\label{eq:regularized_nonlocal}
        - \theta \int_{\Omega} \nabla u \cdot \nabla \varphi + \mathcal{E}(u, \varphi) = \Ltwoprod{g}{\varphi} \qquad \text{for all }\varphi \in \Hs_{(0)}(\Omega)
    \end{equation}
     belongs to $\Hs[2]_{\loc}(\Omega) $ and satisfies the estimate
    \begin{equation*}
        \theta \|\nabla u\|^2_{L^2(\Omega)} + \|u\|_{H^{\alpha/2}(\Omega)}^2 \leqslant C\|g\|_{L^2(\Omega)}^2,
    \end{equation*}
    where $C$ is independent of $\theta>0$ and $g$.
\end{lemma}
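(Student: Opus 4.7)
The plan is to obtain existence and uniqueness via Lax--Milgram on $\Hone(\Omega)$, derive the $\theta$-independent estimate by testing with the solution itself, and finally obtain interior $H^2$-regularity by viewing the equation as a Poisson problem with a lower-order perturbation.

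For existence and uniqueness I would check that the bilinear form $a(u,v) := \theta(\nabla u,\nabla v)_{L^2(\Omega)} + \E(u,v)$ is bounded and coercive on $\Hone(\Omega)$. Boundedness is Cauchy--Schwarz for the gradient term together with the upper norm bound $\E(u,u) \leqslant C\|u\|_{\Hs[\alpha/2](\Omega)}^2$ from Subsection~\ref{S:nonlocal_operator} combined with the embedding $H^1(\Omega) \hookrightarrow \Hs[\alpha/2](\Omega)$ for the $\E$ term (note $\alpha/2 < 1$). Coercivity---with a $\theta$-dependent constant, which suffices for solvability---follows from Poincar\'e's inequality on $\Hone(\Omega)$ together with $\E(u,u) \geqslant 0$. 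Since $g \in \Lp_{(0)}(\Omega)$ defines a bounded linear functional $\varphi \mapsto \Ltwoprod{g}{\varphi}$ on $\Hone(\Omega)$, Lax--Milgram produces a unique $u \in \Hone(\Omega)$.

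For the $\theta$-independent estimate I would test \eqref{eq:regularized_nonlocal} with $\varphi = u$ to obtain
\begin{equation*}
\theta\|\nabla u\|_{L^2(\Omega)}^2 + \E(u,u) = \Ltwoprod{g}{u}.
\end{equation*}
Since $u$ has zero mean, the norm equivalence $\E(u,u) \sim \|u\|_{\Hs[\alpha/2](\Omega)}^2$ from Subsection~\ref{S:nonlocal_operator} applies. Estimating the right-hand side by $\|g\|_{L^2}\|u\|_{L^2} \leqslant C\|g\|_{L^2}\|u\|_{\Hs[\alpha/2](\Omega)}$ and absorbing the $H^{\alpha/2}$-factor into the left by Young's inequality yields the stated bound with a constant independent of $\theta$.

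The hard part is the $\Hs[2]_{\loc}$-regularity. I would interpret the equation distributionally on any subdomain $\Omega'\Subset\Omega$ as $-\theta\Delta u = g - \LL u$ and apply classical interior regularity for $-\Delta$; everything then reduces to proving $\LL u \in \Lp_{\loc}(\Omega)$. The crucial feature is that $\LL$ has order $\alpha\in(1,2)$, strictly less than the order of $-\Delta$. To control $\LL u(x)$ at $x \in \Omega'$ with $\operatorname{dist}(\Omega',\partial\Omega) > \delta$, I would split the principal value integral \eqref{eq:defnL} into a singular part over $B_\delta(x) \subset \Omega$ and a bounded tail: the tail is controlled in $L^\infty_{\loc}$ by \eqref{k-ass-three} together with $u \in L^2(\Omega)$, while the near part is handled by combining the kernel bound \eqref{k-ass-two} with the symmetry \eqref{k-ass-one} to recast the principal-value integrand as a symmetric second-order difference of $u$, which is integrable near the diagonal once enough Sobolev regularity on $u$ is available. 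A bootstrap---iterating interior elliptic regularity for $-\Delta$ with the gain of $2-\alpha$ derivatives furnished by the lower order of $\LL$ at each step---then lifts $u$ above the $\Hs[\alpha]_{\loc}$ threshold needed to conclude $\LL u \in \Lp_{\loc}(\Omega)$, after which a final application of interior $H^2$-regularity for the Laplacian delivers $u\in\Hs[2]_{\loc}(\Omega)$.
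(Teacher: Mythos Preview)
The paper does not supply its own proof of this lemma; it simply quotes it from \cite[Lemma~2.6]{ABG15}. Your argument is correct and matches the approach taken there: Lax--Milgram on $\Hone(\Omega)$ for existence and uniqueness (you have tacitly corrected the sign in \eqref{eq:regularized_nonlocal} to $+\theta\int_\Omega\nabla u\cdot\nabla\varphi$, which is clearly what is intended), testing with $u$ together with the norm equivalence on $H^{\alpha/2}_{(0)}(\Omega)$ for the $\theta$-independent estimate, and interior regularity obtained by viewing the equation as $-\theta\Delta u = g-\LL u$ and bootstrapping, exploiting that $\LL$ has order $\alpha<2$. The precise local mapping properties of $\LL$ between Sobolev spaces that make the bootstrap rigorous are provided by \cite{AK07}, to which \cite{ABG15} itself refers.
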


For the following let $\phi\colon [a,b]\to \R$  be continuous and define $\phi(x)=+\infty$ for $x\not\in [a,b]$. 
 As in \cite[Section~3]{ABG15} we fix $\theta\geqslant 0$
and consider the functional
\begin{equation}\label{eq:DefnF}
    F_{\theta} (c) = \frac{\theta}{2} \int_{\Omega} |\nabla c|^2 \sd x + \frac12\E(c,c)  + \int_{\Omega} \phi( c(x)) \sd x
\end{equation}
where
\begin{eqnarray*}
    \dom F_0&=& \left\{c\in H^{\alpha/2}(\Omega)\cap L^2_{(m)}(\Omega): \phi(c)\in L^1(\Omega)\right\},\\
    \dom F_\theta&=& H^1(\Omega)\cap \dom F_0\qquad \text{if}\ \theta >0
\end{eqnarray*}
for a given $m\in(a,b)$.
Moreover, we define
\begin{equation*}
    \E_\theta (u,v)= \theta \int_\Omega \nabla u\cdot \nabla v \sd x +\E(u,v)
\end{equation*}
for all $u,v\in H^1(\Omega)$ if $\theta>0$ and $u,v\in H^{\alpha/2}(\Omega)$ if $\theta = 0$.

In the following $\partial F_\theta(c)\colon L^2_{(m)}(\Omega)\to \mathcal{P}(L^2_{(0)}(\Omega))$ denotes the subgradient of $F_\theta$ at $c\in \dom F$, i.e.,  $w\in \partial F_\theta(c)$ if and only if
\begin{equation*}
    (w,c'-c)_{L^2} \leqslant F_\theta(c')-F_\theta(c)\qquad \text{for all }c'\in L^2_{(m)}(\Omega).
\end{equation*}


The following characterization of $\partial F_\theta(c)$ is an important tool for the existence proof.
\begin{theorem}\label{thm:Regularity}
    Let $\phi\colon [a,b]\to \R$ be a convex function that is twice continuously differentiable in $(a,b)$ and satisfies $\lim_{x\to a} \phi'(x)=-\infty$, $\lim_{x\to b} \phi'(x) =+\infty $. Moreover, we set $\phi(x)=+\infty$ for $x\not\in (a,b)$ and let $F_\theta$ be defined as in~\eqref{eq:DefnF}. Then $\partial F_\theta\colon \mathcal{D}(\partial F_\theta)\subseteq L^2_{(m)}(\Omega)\to L^2_{(0)}(\Omega)$ is a single valued, maximal monotone operator with
    \begin{eqnarray*}
        \mathcal{D}(\partial F_0) &=&\Big\{ c\in H^\alpha_{\loc}(\Omega)\cap H^{\alpha/2}(\Omega)\cap L^2_{(m)}(\Omega): \phi'(c)\in L^2(\Omega),\exists f\in L^2(\Omega):\\
        & & \quad  \E(c,\varphi) + \int_\Omega \phi'(c)\varphi \, dx= \int_{\Omega} f\varphi \sd x \quad \forall \, \varphi \in H^{\alpha/2}(\Omega)
        \Big\}
    \end{eqnarray*}
    if $\theta=0$ and
    \begin{eqnarray*}
        \mathcal{D}(\partial F_\theta) &=&\Big\{ c\in H^2_{\loc}(\Omega) \cap H^1(\Omega) \cap L^2_{(m)}(\Omega): \phi'(c)\in L^2(\Omega),\exists f\in L^2(\Omega):\\
        & &\quad  \E_\theta(c,\varphi) + \int_\Omega \phi'(c)\varphi \, dx= \int_{\Omega} f\varphi \sd x \quad\, \forall\,\varphi \in H^1(\Omega) \Big\}
    \end{eqnarray*}
    if $\theta > 0$ as well as
    \begin{equation*}
        \partial F_\theta (c) = -\theta  \Delta c + \LL c + P_0\phi'(c)\quad \text{in}\ \mathcal{D}'(\Omega) \qquad \text{for $\theta \geqslant 0$.}
    \end{equation*}
    Moreover, the following estimates hold
    \begin{alignat}{1}\label{eq:DomEstim}
          \theta \|c\|_{H^1}^2+ \|c\|_{H^{\alpha/2}}^2+ \|\phi'(c)\|_2^2&\leqslant  C\left(\|\partial F_\theta(c)\|_2^2 + \|c\|_2^2+1\right)\\\nonumber
        \int_\Omega\int_\Omega (\phi'(c(x))-\phi'(c(y)))&(c(x)-c(y))k(x,y,x-y)\sd x\sd y\\\nonumber
        &\leqslant  C\left(\|\partial F_\theta(c)\|_2^2 + \|c\|_2^2+1\right)\\\nonumber
        \theta \int_\Omega \phi''(c)|\nabla c|^2 \sd x &\leqslant C\left(\|\partial F_\theta(c)\|_2^2 + \|c\|_2^2+1\right)
    \end{alignat}
    for some constant $C>0$ independent of $c\in \mathcal{D}(\partial F_\theta) $ and $\theta \geqslant 0$.
\end{theorem}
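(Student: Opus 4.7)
The plan is to follow the strategy of \cite{AW07} and \cite[Section~3]{ABG15}, handling the added Laplacian term when $\theta>0$ in parallel with the nonlocal form $\E$. Since $\phi$ is convex and $\E$ is a symmetric non-negative bilinear form by \eqref{k-ass-one} and \eqref{eq:EquivNorm2}, the functional $F_\theta$ is proper, convex, and lower semicontinuous on $L^2_{(m)}(\Omega)$, so $\partial F_\theta$ is automatically a maximal monotone operator. The candidate representation is
\begin{equation*}
A_\theta c \ZuWeis -\theta\Delta c + \LL c + P_0\phi'(c)
\end{equation*}
on the domain given in the statement, and the task is to show $A_\theta=\partial F_\theta$ as maximal monotone operators $L^2_{(m)}(\Omega)\to L^2_{(0)}(\Omega)$ and then to derive \eqref{eq:DomEstim}.

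The key step is surjectivity of $I+A_\theta$, which via Minty's theorem yields maximal monotonicity; the monotonicity of $A_\theta$ itself is immediate from \eqref{k-ass-three} and the monotonicity of $\phi'$. For this I would Moreau--Yosida regularize $\phi$ by $\phi_\lambda$ (convex, $C^{1,1}$, satisfying $\phi_\lambda\nearrow\phi$), and for given $g\in L^2_{(m)}(\Omega)$ solve $c_\lambda + A_\theta^\lambda c_\lambda = g$ by minimizing $\tfrac12\|c-g\|_{L^2}^2+F_\theta^\lambda(c)$ over $\{c\in\dom F_\theta : c_\Omega = m\}$. The Euler--Lagrange equation coincides with $c_\lambda+\partial F_\theta^\lambda(c_\lambda)\ni g$, and since $\phi_\lambda'$ is Lipschitz the identification $\partial F_\theta^\lambda(c)=-\theta\Delta c+\LL c+P_0\phi_\lambda'(c)$ on the appropriate natural domain is routine. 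Interior regularity $c_\lambda\in H^2_{\loc}(\Omega)$ for $\theta>0$, respectively $c_\lambda\in \Hs[\alpha]_{\loc}(\Omega)$ for $\theta=0$, then follows from Lemma~\ref{L:regularity_regularization} combined with standard nonlocal elliptic regularity in the latter case.

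Uniform estimates in $\lambda$ come from testing $c_\lambda+A_\theta^\lambda c_\lambda = g$ against $\phi_\lambda'(c_\lambda)$. By convexity of $\phi_\lambda$, both
\begin{equation*}
\int_\Omega\int_\Omega(\phi_\lambda'(c_\lambda(x))-\phi_\lambda'(c_\lambda(y)))(c_\lambda(x)-c_\lambda(y))k(x,y,x-y)\sd x\sd y \geq 0
\end{equation*}
and $\theta\int_\Omega\phi_\lambda''(c_\lambda)|\nabla c_\lambda|^2\sd x\geq 0$, so these two quantities together with $\|\phi_\lambda'(c_\lambda)-(\phi_\lambda'(c_\lambda))_\Omega\|_{L^2}^2$ are controlled by $\|g\|_{L^2}^2$ up to lower-order terms. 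To pin down the mean value $(\phi_\lambda'(c_\lambda))_\Omega$ one uses the constraint $(c_\lambda)_\Omega=m\in(a,b)$: since $\phi'$ blows up only at the endpoints, the subset of $\Omega$ on which $c_\lambda$ stays in a fixed compact subinterval of $(a,b)$ has controlled measure, which combined with the oscillation estimate yields a uniform bound on $\|\phi_\lambda'(c_\lambda)\|_{L^2}$, exactly as in the proof of \cite[Theorem~3.1]{ABG15}. Testing with $c_\lambda$ in parallel produces uniform $H^{\alpha/2}$ (respectively $H^1$) bounds on $c_\lambda$.

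Passing to the limit $\lambda\to 0$ is then standard: weak compactness and Rellich give $c_\lambda\rightharpoonup c$ in $\Hs[\alpha/2](\Omega)$ (or $H^1(\Omega)$) and a.e.\ along a subsequence, and pointwise convergence $\phi_\lambda'\to\phi'$ on $(a,b)$ with the uniform $L^2$-bound identifies $\phi'(c)\in L^2(\Omega)$ as the weak limit of $\phi_\lambda'(c_\lambda)$. Passing to the limit in the variational equation yields $g\in c+A_\theta c$, proving surjectivity. The inclusion $A_\theta\subseteq\partial F_\theta$ is checked directly from the subgradient inequality using convexity of $\phi$ and of the quadratic part, and maximality forces equality. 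The estimates \eqref{eq:DomEstim} are then obtained by running the same test-function arguments at $\lambda=0$ on $g=c+A_\theta c$ with $g\ZuWeis c+\partial F_\theta(c)$. The main obstacle is precisely the uniform $L^2$-control of the singular term $\phi'(c_\lambda)$: because $H^1$-regularity of $c_\lambda$ (on which the local Cahn--Hilliard argument in \cite{AW07} relies) is not available when $\theta=0$, the sign structure of the double integral above together with the lower kernel bound in \eqref{k-ass-three} must take its place, which is the key nonlocal ingredient imported from \cite{ABG15}.
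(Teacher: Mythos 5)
The paper's own ``proof'' of this theorem is a one-line citation to \cite[Corollary~3.2 and Theorem~3.3]{ABG15}, and your proposal reconstructs precisely the strategy used there (Moreau--Yosida regularization, monotonicity plus surjectivity of $I+A_\theta$ via Minty, identification $A_\theta=\partial F_\theta$, and the key nonlocal sign observation that $\iint(\phi'(c(x))-\phi'(c(y)))(c(x)-c(y))k\sd x\sd y\geq 0$ substitutes for the $H^1$-based arguments of the local Cahn--Hilliard theory). So this is essentially the same approach, merely spelled out rather than cited. One small imprecision worth noting: your description of how the mean $(\phi_\lambda'(c_\lambda))_\Omega$ is controlled --- via a measure estimate on the level set where $c_\lambda$ lies in a compact subinterval --- is less direct than what is typically done; testing against $P_0 c_\lambda$ first gives a uniform $L^1$-bound on $\phi_\lambda'(c_\lambda)$ (using the pointwise inequality $\phi_\lambda'(s)(s-m)\geq C_\varepsilon|\phi_\lambda'(s)|-\tilde C_\varepsilon$ valid since $m\in(a,b)$), which bounds the mean directly and then combines with the $\|P_0\phi_\lambda'(c_\lambda)\|_{L^2}$-bound obtained from testing with $\phi_\lambda'(c_\lambda)$. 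Also, for the interior regularity in the $\theta=0$ case, Lemma~\ref{L:regularity_regularization} does not apply (it has $\theta>0$ built in); one needs the purely nonlocal local-regularity result from \cite[Lemma~4.3]{AK07}, which the paper cites elsewhere for exactly this purpose.
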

The result follows from \cite[Corollary~3.2 and Theorem~3.3]{ABG15}.

\section{Weak Solutions and Main Result} \label{secexistence}

In this section we define weak solutions for the system \eqref{eq:1}-\eqref{eq:4}, \eqref{eq:5}-\eqref{eq:7} together with a natural boundary condition for $\varphi$ given by the bilinear form $\E$, summarize the assumptions and state the main result. 


\begin{assumption} \label{assumptions}
Let $\Omega \subset \R^d$, $d=2,3$, be a bounded domain with $C^2$-boundary. The following conditions hold true:
\begin{enumerate}
 \item $\rho(\p) = \frac{1}{2}(\tilde{\rho}_1 + \tilde{\rho}_2) + \frac{1}{2} (\tilde{\rho}_2 - \tilde{\rho}_1) \p$ for all $\varphi \in [-1,1]$.
 \item $m \in C^1(\R)$, $\eta \in C^0(\R)$ and there are constants $m_0,K>0$ such that $0 < m_0 \leq m(s),\eta(s) \leq K$ for 
       all $s\in\R$.
 \item  
       $\Psi \in C([-1,1]) \cap C^2((-1,1))$ and 
       \begin{align} \label{assumptionsphi}
         \lim_{s \to \pm 1} \Psi'(s) = \pm\infty \,, \quad
         \Psi''(s) \geq -\kappa \; \mbox{ for some $\kappa \in \R$} \,.
       \end{align}
\end{enumerate}
\end{assumption}
A standard example for a homogeneous free energy density $\Psi$ satisfying the previous conditions is  given by \eqref{logpot}.
Since for solutions we will have $\varphi(x,t)\in [-1, 1]$ almost everywhere, we only need the functions $m,\eta$
        on this interval. But for simplicity we assume $m,\eta$ to be defined on  $\mathbb{R}$.

\begin{definition} \label{defweaksolution}
 Let  $\bv_0 \in L^2_\sigma(\Omega)$ and $\p_0 \in H^{\alpha/2}(\Omega)$ with $|\varphi_0| \leq 1$ almost everywhere in $\Omega$ and let Assumption \ref{assumptions} be satisfied. Then $(\bv,\p,\mu)$ 
such that
 \begin{align*}
  & \bv \in BC_w([0,\infty);L^2_\sigma(\Omega)) \cap L^2(0,\infty;H_0^1(\Omega)^d) \,, \\
  & \p \in BC_w([0,\infty);H^{\alpha/2}(\Omega)) \cap L^2_{\mbox{\footnotesize uloc}}([0,\infty);H^\alpha_{\loc}(\Omega)) \,, \; \
        \Psi'(\p) \in L^2_{\mbox{\footnotesize uloc}}([0,\infty);L^2(\Omega)) \,, \\
  & \mu \in L^2_{\mbox{\footnotesize uloc}}([0,\infty);H^1(\Omega)) 
    \; \mbox{ with } \; \nabla \mu \in L^2(0,\infty;L^2(\Omega)) 
 \end{align*}
 is called a weak solution of \eqref{eq:1}-\eqref{eq:4}, \eqref{eq:4}-\eqref{eq:5}
 if the following conditions hold true:
 \begin{align}  
   - \left(\rho \bv , \partial_t \bpsi \right)_{Q} 
  &+ \left( \operatorname{div}(\rho \bv \otimes \bv) , \bpsi \right)_{Q}
  + \left(2 \eta(\p) D\bv , D\bpsi \right)_{Q} 
   - \left( (\bv \otimes \tbj) , \nabla \bpsi \right)_{Q} \nonumber \\
  &= -\left( \varphi\nabla \mu  , \bpsi \right)_{Q} \label{weakline1} 
 \end{align}
 for all $\bpsi \in C_0^\infty(\Omega \times (0,\infty))^d$ with $\operatorname{div} \bpsi = 0$,
 \begin{align} 
  - \left(\p , \partial_t \psi \right)_{Q} 
  + \left( \bv \cdot \nabla \p , \psi \right)_{Q}
  &= - \left(m(\p) \nabla \mu , \nabla \psi \right)_{Q}  \label{weakline2} \\
 \int_0^\infty \int_\Omega \mu \psi \, dx \, dt =  \int_0^\infty \int_\Omega\Psi'(\varphi) \psi \, dx\, dt  &+ \int_0^\infty \E(\varphi(t),\psi(t))\, dt \label{weakline3}
 \end{align}
 for all $\psi \in C_0^\infty((0,\infty);C^1(\overline{\Omega}))$ and
 \begin{align}
  \left.\left( \bv,\p \right)\right|_{t=0} &= \left( \bv_0 , \p_0 \right) \,. \label{weakline4} 
 \end{align}
\changes{Recall $ \tbj = -\frac{\tilde{\rho}_2 - \tilde{\rho}_1}{2} m(\varphi) \nabla \mu.$}
 Finally, the energy inequality
 \begin{align} 
  E_{\mbox{\footnotesize tot}}(\p(t),\bv(t)) &+ \int_s^t\int_{\Omega} 2 \eta(\p) \, |D\bv|^2 \, dx\, d\tau 
        + \int_s^t\int_{\Omega} m(\p) |\nabla \mu|^2 \, dx\, d\tau \nonumber \\
   &\leq E_{\mbox{\footnotesize tot}}(\p(s),\bv(s)) \label{weakline5}
 \end{align}
 holds true for all $t \in [s,\infty)$ and almost all $s \in [0,\infty)$ (including $s=0$). Here $E_{\mbox{\footnotesize tot}}$ is as in \eqref{totalenergy}.
\end{definition}

The main result of this contribution is:
\begin{theorem}[Existence of Weak Solutions] \label{existenceweaksolution}~\\
 Let Assumption \ref{assumptions} hold \changes{and $\alpha\in (1,2)$}. 
 Then for every  $\bv_0 \in L^2_\sigma(\Omega)$ and $\varphi_0 \in H^{\alpha/2}(\Omega)$
 such that $|\varphi_0| \leq 1$ almost everywhere and $\changes{(\varphi_0)_{\Omega}} \in (-1,1)$ there exists a weak solution $(\bv,\varphi,\mu)$ of \eqref{eq:1}-\eqref{eq:4}, \eqref{eq:5}-\eqref{eq:7}. 
\end{theorem}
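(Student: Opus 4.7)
The plan is to follow the strategy of \cite{ADG13}, namely an implicit time discretization combined with a Leray--Schauder fixed point argument, followed by passage to the limit, while incorporating the techniques of \cite{ABG15} to handle the nonlocal operator $\LL$ and the singular potential $\Psi$. The two main structural modifications compared to \cite{ADG13} are forced by the fact that $\varphi$ only lives in $H^{\alpha/2}(\Omega)$ in space: in the weak formulation \eqref{weakline1} the term $\mu\nabla\varphi$ must be rewritten, using $\operatorname{div}\bv=0$, as $-\varphi\nabla\mu$ modulo a gradient absorbed in the pressure; and on the discrete level a small regularizing term $-\theta_h\Delta\varphi_{k+1}$ is added to the chemical potential equation, with $\theta_h\downarrow 0$ as the step size $h\downarrow 0$, so that Theorem~\ref{thm:Regularity} can be applied with $\theta=\theta_h>0$ to identify the elliptic part with the subdifferential $\partial F_{\theta_h}$.

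First I would construct the time-discrete scheme: given $(\bv_k,\varphi_k)$ with step size $h$, I look for $(\bv_{k+1},\varphi_{k+1},\mu_{k+1})$ solving a discrete version of \eqref{eq:1}--\eqref{eq:4}, where $\varphi_k$ is mollified in the convective coupling to circumvent the lack of $\nabla\varphi$-regularity. Existence of a discrete solution follows from the Leray--Schauder theorem applied to a parameter-family of linearized problems, essentially as in \cite[Sec.~4]{ADG13}, with Theorem~\ref{thm:Regularity} handling the Cahn--Hilliard block. Testing the discrete momentum equation by $\bv_{k+1}$ and the discrete phase equation by $\mu_{k+1}$, and using convexity of $F_{\theta_h}$ to bound $F_{\theta_h}(\varphi_{k+1})-F_{\theta_h}(\varphi_k)$ by $(\partial F_{\theta_h}(\varphi_{k+1}),\varphi_{k+1}-\varphi_k)_{L^2}$, yields a discrete energy inequality. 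Summation produces bounds, uniform in $h$, for $\bv^h$ in $L^\infty(L^2_\sigma)\cap L^2(H^1_0)$, for $\varphi^h$ in $L^\infty(H^{\alpha/2})$, for $\nabla\mu^h$ in $L^2(L^2)$, for $\Psi'(\varphi^h)$ in $L^2_{\uloc}(L^2)$ (using \eqref{eq:DomEstim}), while the regularizing contribution $\sqrt{\theta_h}\,\nabla\varphi^h$ is only bounded in $L^\infty(L^2)$ but vanishes in the limit after multiplication by $\theta_h^{1/2}$.

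Next I pass to the limit. Bounding discrete time differences of $\bv^h$ and $\varphi^h$ in suitable dual spaces and combining with the above a priori estimates, an Aubin--Lions type argument delivers strong convergence of $\bv^h$ in $L^2(L^2)$ and of $\varphi^h$ in $C([0,T];L^2)\cap L^2(H^s)$ for any $s<\alpha/2$, together with weak-$\ast$/weak convergence in the natural function spaces. The linear terms and the standard nonlinear Navier--Stokes terms pass to the limit as in \cite{ADG13}; the term $\Psi'(\varphi^h)\rightharpoonup \Psi'(\varphi)$ in $L^2$ is treated by the maximal monotone framework of Theorem~\ref{thm:Regularity} applied on the space-time cylinder, as in \cite[Sec.~5]{ABG15}; the reformulated capillary term $\varphi^h\nabla\mu^h\to \varphi\nabla\mu$ is identified using strong $L^2$-convergence of $\varphi^h$ and weak $L^2$-convergence of $\nabla\mu^h$. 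This yields \eqref{weakline1}--\eqref{weakline3} for the limit.

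The main obstacle will be the energy inequality \eqref{weakline5} in the limit. Weak lower semicontinuity immediately controls $E_{\mbox{\footnotesize kin}}$, the dissipation terms, and the singular bulk part $\int_\Omega\Psi(\varphi)\,dx$; the delicate contribution is $\frac12\E(\varphi(t),\varphi(t))$, since $H^1$-control of $\varphi^h$ is unavailable and the regularizing term $\tfrac{\theta_h}{2}\|\nabla\varphi^h\|_2^2$ must be shown to vanish. I would recover the required strong convergence of $\varphi^h(t)\to\varphi(t)$ in $H^{\alpha/2}(\Omega)$ for almost every $t$ by combining weak convergence with convergence of norms: since $H^{\alpha/2}(\Omega)$ is a uniformly convex Hilbert space, the norm convergence is enough, and it is obtained by testing the discrete chemical potential equation with $\varphi^h(t)-\varphi(t)$ and using \eqref{weakline3} to identify $\lim_h\E(\varphi^h(t),\varphi^h(t))=\E(\varphi(t),\varphi(t))$, with $\theta_h\|\nabla\varphi^h(t)\|_2^2\to 0$ controlled by the estimates in \eqref{eq:DomEstim}. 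A final check that $\bv^h(0)\to\bv_0$ in $L^2_\sigma$ and $\varphi^h(0)\to\varphi_0$ in $H^{\alpha/2}$ ensures that the inequality holds starting at $s=0$.
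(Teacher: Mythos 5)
Your proposal follows essentially the same route as the paper: an implicit time discretization with a mollified convective coupling and a small added $-\theta_h\Delta$ term in the chemical potential equation (the paper takes $\theta_h=h$), discrete existence via Leray--Schauder using the maximal monotone operator $\partial F_{\theta_h}$ of Theorem~\ref{thm:Regularity}, discrete energy estimates, Aubin--Lions compactness, the ABG15-style identification of $\Psi_0'(\varphi)$, and recovery of the energy inequality by combining weak convergence of $\varphi^N$ in $L^2(0,T;H^{\alpha/2})$ with convergence of the $\E$-norm (obtained by testing the discrete chemical potential equation with $\varphi^N$ and comparing with \eqref{weakline3}) to conclude strong convergence in the uniformly convex space. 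The only cosmetic differences are that you keep $\theta_h$ abstract rather than setting it equal to $h$, and you phrase the norm-convergence test slightly differently, but the argument is the same.
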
\changes{
\begin{remark}
  Using e.g.\ $\varphi \nabla \mu \in L^2(0,\infty; L^2(\Omega))$ one can consider this term in \eqref{weakline1} as a given right-hand side and obtain the existence of a pressure such that \eqref{eq:1} holds in the sense of distributions in the same way as for the single Navier-Stokes equations, cf. e.g.\ \cite{Soh01}.
\end{remark}}

\section{Approximation by an Implicit Time Discretization}\label{sec:Implicit}

Let $\Psi$ be as in Assumption~\ref{assumptions}. We define $\Psi_0\colon [-1,1]\to \R$ by $\Psi_0(s)= \Psi(s)+ \kappa \frac{s^2}2$ for all $s\in [a,b]$. Then $\Psi_0\colon [-1,1]\to\R$ is convex and $\lim_{s\to\pm 1}\Psi'_0(s)= \pm \infty$. A basic idea for the following is to use this decomposition to split the free energy $E_{\mbox{\footnotesize free}}$ into a singular convex part $E$ and a quadratic perturbation. In the equations this yields a decomposition into a singular monotone operator and a linear remainder.
To this end we define an energy $E \colon L^2(\Omega) \to \R\cup\{+\infty\}$ 
with domain $$\mbox{dom}\, E = \{\p \in H^{\alpha/2}(\Omega) \;|\; -1 \leq \p \leq 1 \,\mbox{ a.e.} \}$$ 
given by 
\begin{align} \label{helpenergy}
 E(\p) = \left\{ 
                    \begin{array}{cl} 
                     \frac12\E(\p,\p) + \int_\Omega \Psi_0(\p) \, dx
                        & \mbox{for } \; \varphi \in \mbox{dom}\, E \,, \\
                     + \infty & \mbox{else} \,.
                    \end{array}
                    \right.
\end{align}
This yields the decomposition
\begin{align*}
 E_{\mbox{\footnotesize free}}(\p) &= E(\p) - \frac{{\kappa}}{2} \|\p\|_{L^2}^2 \qquad \text{for all }\p\in \mbox{dom}\ E.
\end{align*}
Moreover, $E$ is convex and $E=F_0$ if one chooses $\phi=\Psi_0$ and $F_0$ is as in Subsection~\ref{S:nonlocal_operator}. This is a key relation for the following analysis in order to make use of Theorem~\ref{thm:Regularity}, which in particular implies that $\partial E=\partial F_0$ is a maximal monotone operator.


To prove our main result we discretize our system semi-implicitly in time in a suitable manner.
To this end, let $h=\frac{1}{N}$ for $N \in \mathbb{N}$ and $\bv_k \in L^2_\sigma(\Omega)$, 
$\p_k \in H^1(\Omega)$ with $\p_k(x)\in [-1,1]$ almost everywhere  and $\rho_k = 
\frac{1}{2}(\tilde{\rho}_1 + \tilde{\rho}_2) + \frac{1}{2} (\tilde{\rho}_2 - \tilde{\rho}_1) \p_k$ 
be given. Then $\Psi(\p_k)\in L^1(\Omega)$. We also define a smoothing operator $P_h$ on $L^2(\Omega)$ as follows.
We choose $u$ as the solution of the following heat equation
\begin{eqnarray*}
 \left\{ \begin{array}{rcll}
          \partial_t u - \Delta u &=& 0 & \mbox{in } \, \Omega \times (0,T) \,, \\
          u|_{t=0} &=& \p' & \mbox{on } \, \Omega \,, \\
          \left.\partial_\nu u\right|_{\partial \Omega} &=& 0 & \mbox{on } \, \partial \Omega \times (0,T), 
         \end{array}
 \right.
\end{eqnarray*}
where $ \p' \in L^2(\Omega) $, and set $ P_h \p' := u|_{t=h} $. Then $P_h \p'
\in H^2(\Omega)$ and $P_h \p' \rightarrow \p'$ in $L^2(\Omega)$ as $h\to 0$ for all $\p'\in L^2(\Omega)$. Moreover, we have $|P_h \p'| \leq 1$ in $\Omega$ if $|\p'(x)|\leq 1$ almost everywhere and $P_h \p' \rightarrow_{h\to 0} \p'$ in $H^{\frac{\alpha}{2}}(\Omega)$ as $h\to 0$ for all $\p'\in H^{\frac{\alpha}{2}}(\Omega)$. 

Now we determine $(\bv,\p,\mu)=(\bv_{k+1},\p_{k+1},\mu_{k+1})$, $k\in\N$, successively as solution of the 
following  problem:
Find $\bv \in H_0^1(\Omega)^d \cap L^2_\sigma(\Omega)$, $\varphi \in \mathcal{D}(\partial E)$
and 
$$
\mu \in H^2_n(\Omega) = \{u \in H^2(\Omega) \,|\, \left. \partial_\no u \right|_{\partial \Omega} = 0 \mbox{ on } \partial \Omega\},
$$ 
such that
\begin{align} 
 \left( \frac{\rho \bv - \rho_k \bv_k}{h} , \bpsi\right)_\Omega 
 &+ \left( \mbox{div}(\rho(P_h \p_k)  \bv \otimes \bv) , \bpsi \right)_{\Omega}
 + \left(2 \eta(\p_k) D\bv , D \bpsi \right)_\Omega 
 + \left( \mbox{div}( \bv \otimes \tbj) , \bpsi \right)_{\Omega} \nonumber \\
 &= -\left((P_h \p_k) \nabla \mu , \bpsi \right)_\Omega \label{timediscretizationline1}
\end{align}
for all $\bpsi \in C_{0,\sigma}^\infty(\Omega)$, 
\begin{align} 
 &\frac{\p - \p_k}{h} + \bv \cdot \nabla P_h \p_k = \mbox{div} \left( m(P_h \p_k) \nabla \mu \right) 
  \; \mbox{ almost everywhere in $\Omega$ },  \label{timediscretizationline2}
\end{align}
and 
\begin{align}
 &  \int_\Omega \left(\mu + {\kappa} \, \frac{\p + \p_k}{2}\right)\psi \, dx 
 = \E(\p,\psi) +\int_\Omega {\Psi}_0'(\p)\psi\, dx + h \int_\Omega \nabla \p \cdot \nabla \psi \, dx \label{timediscretizationline3}
\end{align}
for all $\psi\in H^{\alpha/2}(\Omega)$, where
\begin{align*}
 \tbj \equiv \tbj_{k+1} := - \tfrac{\tilde{\rho}_2 - \tilde{\rho}_1}{2} m(P_h \varphi_k) \nabla \mu_{k+1}
  = - \tfrac{\tilde{\rho}_2 - \tilde{\rho}_1}{2} m(P_h \varphi_k) \nabla \mu \,.
\end{align*}
For the following let 
\begin{align}
	 E_{\mbox{\footnotesize tot}, h}(\p, \bv)=\int_{\Omega} \rho \frac{|\bv|^2}{2} \, dx + \int_{\Omega} \Psi(\p) \, dx + \frac12 \E(\p, \p) + \frac{h}{2} \int_{\Omega} |\nabla \p|^2 \, dx.
\end{align} 
denote the total energy of the system \eqref{timediscretizationline1}-\eqref{timediscretizationline3}.

\begin{remark} 
 \begin{enumerate}
  \item As in \cite{ADG13} we obtain the important relation
        \begin{align*}
         -\frac{\rho - \rho_k}{h} - \bv \cdot \nabla \rho(P_h \p_k) = \operatorname{div} \tbj \,,
        \end{align*}
        by multiplication of \eqref{timediscretizationline2} with $-\frac{\tilde{\rho}_2 - \tilde{\rho}_1}{2}= \frac{\partial\rho(\p)}{\partial \p}$.
        Because of $\operatorname{div}(\bv \otimes \tbj) = (\operatorname{div} \tbj) \bv 
        + \left(\tbj \cdot \nabla \right) \bv$ this yields that
        \begin{align} \label{equivtimediscretizationline1}
         &\left( \frac{\rho \bv - \rho_k \bv_k}{h} , \bpsi\right)_\Omega 
          + \left( \operatorname{div}(\rho(P_h \p_k)  \bv \otimes \bv) , \bpsi \right)_{\Omega}
          + \left(2 \eta(\p_k) D\bv , D \bpsi \right)_\Omega \\
          +& \left( \left(\operatorname{div} \tbj - \frac{\rho - \rho_k}{h} - \bv \cdot \nabla \rho(P_h \p_k) \right) \frac{\bv}{2} 
              , \bpsi \right)_{\Omega} 
          + \left( \left( \tbj \cdot \nabla \right) \bv , \bpsi \right)_{\Omega} 
          = - \left( (P_h \p_k) \nabla \mu , \bpsi \right)_\Omega \nonumber  
        \end{align}
        for all $\bpsi \in C_{0,\sigma}^\infty(\Omega)$ to \eqref{timediscretizationline1}, which will be used to derive suitable 
        a-priori estimates.
  \item Integrating \eqref{timediscretizationline2} in space one obtains $\int_\Omega \varphi \, dx = \int_\Omega \varphi_k \, dx$  because of $\operatorname{div} \, \bv = 0$ and the boundary
        conditions.
 \end{enumerate}
\end{remark}

The following lemma is important to control the derivative of the singular free energy density $\Psi'(\p)$.
\begin{lemma} \label{derivativeforchempot}
 Let $\varphi \in \mathcal{D}(\partial F_h)$ and 
 $\mu \in H^1(\Omega)$ be a solution of \eqref{timediscretizationline3} for given $\varphi_k \in H^1(\Omega)$ 
 with $|\varphi_k(x)| \leq 1$ almost everywhere in $\Omega$ such that
 \begin{align*}
  \changes{\varphi_{\Omega}=}\tfrac{1}{|\Omega|} \int_\Omega \varphi \, dx = \tfrac{1}{|\Omega|} \int_\Omega \varphi_k \, dx \in (-1,1) \,.
 \end{align*}
 Then there is a constant $C=C(\int_\Omega \varphi_k, \Omega)>0$, independent of $\p, \mu, \p_k$, such that
 \begin{align*}
  \| \Psi_0'(\p) \|_{L^2(\Omega)} + \left| \int_\Omega \mu \, dx \right| 
   & \leq C (\|\nabla \mu\|_{L^2} + \|\nabla \p\|_{L^2}^2 + 1) \; \mbox{and} \\
  \|\partial F_h(\p) \|_{L^2(\Omega)} & \leq C \left( \|\mu\|_{L^2} + 1 \right) \,. 
 \end{align*}
\end{lemma}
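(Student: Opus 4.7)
The plan is to reinterpret equation \eqref{timediscretizationline3} as the weak formulation characterizing $\partial F_h(\varphi)$ from Theorem~\ref{thm:Regularity} with $\phi = \Psi_0$ and $\theta = h$, and then apply the a priori estimate \eqref{eq:DomEstim}. Setting $f := \mu + \kappa(\varphi + \varphi_k)/2 \in L^2(\Omega)$, equation \eqref{timediscretizationline3} reads
\begin{equation*}
    \E_h(\varphi,\psi) + \int_\Omega \Psi_0'(\varphi)\psi\, dx = \int_\Omega f\psi\, dx \qquad \text{for all } \psi \in H^1(\Omega),
\end{equation*}
which, by the characterization of $\mathcal{D}(\partial F_h)$, indeed confirms $\varphi \in \mathcal{D}(\partial F_h)$ and gives $\partial F_h(\varphi) = P_0 f$.

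First I would take $\psi \equiv 1$ in \eqref{timediscretizationline3}: both $\E(\varphi,1)$ and the Dirichlet term vanish, yielding
$\int_\Omega \mu\, dx + \kappa \int_\Omega \tfrac{\varphi+\varphi_k}{2}\, dx = \int_\Omega \Psi_0'(\varphi)\, dx$.
Since $|\varphi|,|\varphi_k|\leq 1$ and $\varphi_\Omega = (\varphi_k)_\Omega$, this gives
\begin{equation*}
    \Big|\int_\Omega \mu\, dx\Big| \leq \|\Psi_0'(\varphi)\|_{L^1(\Omega)} + C \leq |\Omega|^{1/2}\|\Psi_0'(\varphi)\|_{L^2(\Omega)} + C,
\end{equation*}
so it remains only to control $\|\Psi_0'(\varphi)\|_{L^2}$.

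For this, I would apply the first estimate in \eqref{eq:DomEstim}, together with $\|\varphi\|_{L^2}^2 \leq |\Omega|$, to obtain
\begin{equation*}
   \|\Psi_0'(\varphi)\|_{L^2(\Omega)}^2 \leq C\bigl(\|\partial F_h(\varphi)\|_{L^2(\Omega)}^2 + 1\bigr).
\end{equation*}
Now $\partial F_h(\varphi) = P_0 f = (\mu - \mu_\Omega) + \kappa P_0\!\bigl(\tfrac{\varphi+\varphi_k}{2}\bigr)$. By Poincar\'e's inequality and $|\varphi|,|\varphi_k|\leq 1$,
\begin{equation*}
    \|\partial F_h(\varphi)\|_{L^2(\Omega)} \leq \|\mu - \mu_\Omega\|_{L^2(\Omega)} + C \leq C\bigl(\|\nabla \mu\|_{L^2(\Omega)} + 1\bigr),
\end{equation*}
so $\|\Psi_0'(\varphi)\|_{L^2(\Omega)} \leq C(\|\nabla\mu\|_{L^2(\Omega)} + 1)$, which combined with the previous step yields the first asserted inequality (the $\|\nabla\varphi\|_{L^2}^2$ term being redundant). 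For the second inequality, since $P_0$ is an orthogonal projection, $\|\partial F_h(\varphi)\|_{L^2} = \|P_0 f\|_{L^2} \leq \|f\|_{L^2} \leq \|\mu\|_{L^2} + \tfrac{\kappa}{2}\|\varphi+\varphi_k\|_{L^2} \leq \|\mu\|_{L^2} + C$, as required.

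The only delicate point is the identification $\partial F_h(\varphi) = P_0 f$ via Theorem~\ref{thm:Regularity}: one must verify that the tested identity \eqref{timediscretizationline3} on $\psi \in H^{\alpha/2}(\Omega)$ (in particular on $\psi \in H^1(\Omega)$) falls under the hypotheses of that theorem and that the convexity and blow-up behaviour of $\Psi_0$ at $\pm 1$ are exactly those required. All the remaining work is elementary Poincar\'e and Cauchy-Schwarz, with constants depending on $\varphi_\Omega \in (-1,1)$ solely through the constant supplied by \eqref{eq:DomEstim}.
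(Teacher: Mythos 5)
Your argument is correct, and it takes a genuinely different (and somewhat leaner) route than the paper. The paper's proof essentially re-derives, inside Lemma~\ref{derivativeforchempot}, part of the estimate that is already packaged in \eqref{eq:DomEstim}: it tests \eqref{timediscretizationline3} with $\psi=\p-\p_\Omega$ and uses the coercivity inequality $\Psi_0'(\p)(\p-\p_\Omega)\geq C_\eps|\Psi_0'(\p)|-\tilde C_\eps$ to obtain an $L^1$ bound for $\Psi_0'(\p)$ in terms of $\|\nabla\mu\|_{L^2}$, then tests with $\psi=1$, and only afterwards invokes \eqref{eq:DomEstim} together with \eqref{timediscretizationline3} for the $L^2$ bounds. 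You instead observe at the outset that \eqref{timediscretizationline3} identifies $\partial F_h(\p)=P_0 f$ with $f=\mu+\kappa(\p+\p_k)/2$ (a fact that follows from the characterization in Theorem~\ref{thm:Regularity}, noting that $\E_h(\p,1)=0$ forces $f_\Omega=(\Psi_0'(\p))_\Omega$), bound $\|\partial F_h(\p)\|_{L^2}\leq\|\mu-\mu_\Omega\|_{L^2}+C\leq C(\|\nabla\mu\|_{L^2}+1)$ directly by Poincar\'e and $|\p|,|\p_k|\leq1$, feed that into \eqref{eq:DomEstim} to control $\|\Psi_0'(\p)\|_{L^2}$, and only then test with $\psi=1$ to control $|\int_\Omega\mu\,dx|$. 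Your route is shorter, delegates the convexity/blow-up argument entirely to the cited \eqref{eq:DomEstim}, and incidentally shows that the $\|\nabla\p\|_{L^2}^2$ term in the stated bound is indeed superfluous --- which is worth noting, since in Section~\ref{sec:proof} only $h^{1/2}\nabla\p^N$ is controlled uniformly, so the lemma is in fact applied there in the form without that term. One small point worth making explicit: the constant in \eqref{eq:DomEstim} depends on the mean value $m=\p_\Omega$, which is exactly why $C$ in the lemma is allowed to depend on $\int_\Omega\p_k\,dx$; your proof inherits this dependence through \eqref{eq:DomEstim} just as the paper's does through the choice of $\eps$ with $\p_\Omega\in(-1+\eps,1-\eps)$.
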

\begin{proof}
  The proof is an adaptation of the corresponding result in \cite{ADG13}. For the convenience of the reader we give the details.
First we choose  $\psi = \p - \changes{\p_{\Omega}}$ in \eqref{timediscretizationline3} and  get
\begin{align} \label{testedwithFanddiff}
  &\int_\Omega \mu (\p-\changes{\p_{\Omega}} ) \, dx
 + \int_\Omega \kappa \frac{\p + \p_k}{2} (\p - \changes{\p_{\Omega}}) \, dx \nonumber \\
 =& \E(\p, \p)  
   + \int_\Omega \Psi_0'(\p) (\p - \changes{\p_{\Omega}}) \, dx \, +h \int_{\Omega} \nabla \p \cdot \nabla \p \, dx \,.
\end{align}
Let $\mu_0 = \mu - \changes{\mu_{\Omega}}$. Then $\int_\Omega \mu (\p - \changes{\p_{\Omega}}) \, dx = \int_\Omega \mu_0 \p \, dx$.

In order to estimate the second term in \eqref{testedwithFanddiff} we 
use that $\overline{\p} \in (-1+\varepsilon,1-\varepsilon)$ for sufficiently small $\varepsilon > 0$ and that $\lim_{\p \to \pm1} \Psi_0'(\p) = \pm \infty$. Hence for sufficiently small $ \varepsilon$ one obtains the inequality
$\Psi_0'(\p) (\p - \changes{\p_{\Omega}}) \geq C_\varepsilon |\Psi_0'(\p)| - \tilde{C}_\varepsilon$, which implies 
\begin{align*}
 \int_\Omega \Psi_0'(\p) (\p - \changes{\p_{\Omega}}) \, dx
   \geq C \int_\Omega |\Psi_0'(\p)| \, dx - C_1 \,.
\end{align*}
Together with \eqref{testedwithFanddiff} we obtain
\begin{align*}
 \int_\Omega |\Psi_0'(\p)| \, dx
  &\leq \; C \|\mu_0\|_{L^2(\Omega)} \|\p\|_{L^2(\Omega)} 
  + C \int_{\Omega} \frac{\kappa}{2} | \p + \p_k| | \p - \changes{\p_{\Omega}} | \, dx
  + C_1 \\
  &\leq  \; C (\|\mu_0\|_{L^2(\Omega)}  + \|\p\|_{L^2(\Omega)}^2 + 1) \\
  &\leq  \; C (\|\nabla \mu\|_{L^2(\Omega)} + 1)  \,,
\end{align*}
because of $|\varphi|$, $|\varphi_k| \leq 1$. 
Next we choose  $ \psi \equiv 1$ in \eqref{timediscretizationline3}. This yields
\begin{align*}
 \int_\Omega \mu \, dx = 
  \int_\Omega \Psi_0'(\p) \, dx 
  - \int_\Omega \frac{\kappa}{2} \left(\p + \p_k \right) \, dx \,.
\end{align*}
Altogether this leads to 
\begin{align*}
 \left| \int_\Omega \mu \, dx \right| \leq & 
   C (\|\nabla \mu\|_{L^2(\Omega)} + 1) \,.
\end{align*}
Finally, the estimates of  $\partial F_h(\p)$ and  $\Psi_0'(\p)$ in  $L^2(\Omega)$
follow directly from \eqref{timediscretizationline3} and \eqref{eq:DomEstim}.
\end{proof}

Now we will prove existence of solution to the time-discrete system.
We basically follow the line of the corresponding arguments in \cite{ADG13} here. As before we denote
\begin{equation*}
  H^2_n(\Omega):=\{u\in H^2(\Omega):\no\cdot \nabla u|_{\partial\Omega}=0\}.
\end{equation*}
\begin{lemma} \label{timediscreteexistence}
 For every $\bv_k \in L^2_\sigma(\Omega)$, $\varphi_k \in H^1(\Omega)$ with $|\varphi_k(x)|\leq 1$ almost everywhere, 
 and $\rho_k = \frac{1}{2} (\tilde{\rho}_1 + \tilde{\rho}_2) + \frac{1}{2} (\tilde{\rho}_2 - \tilde{\rho}_1) \varphi_k$
 there is some solution $(\bv,\p,\mu) \in \left( H^1_0(\Omega)^d \cap L^2_\sigma(\Omega) \right)
 \times \mathcal{D}(\partial F_h) \times H^2_n(\Omega)$ of the system \eqref{timediscretizationline2}-\eqref{timediscretizationline3} and \eqref{equivtimediscretizationline1}. Moreover, the solution
 satisfies the discrete energy estimate
 \begin{align} \label{discreteenergyestimate}
  E_{\mbox{\footnotesize tot,h}}&(\p,\bv) + \int_\Omega \rho_k \frac{|\bv - \bv_k|^2}{2} \, dx
    + \int_\Omega \frac{|\nabla \p - \nabla \p_k|^2}{2} \, dx 
    + \frac12 \E(\p - \p_k, \p - \p_k)
    \nonumber \\
    &+ h \int_\Omega 2 \eta(\p_k) |D\bv|^2 \, dx + h \int_\Omega m(\p_k) |\nabla \mu|^2 \, dx 
    \leq E_{\mbox{\footnotesize tot,h}}(\p_k,\bv_k) \,. 
 \end{align}
\end{lemma}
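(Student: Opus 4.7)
The strategy is a Leray-Schauder fixed-point argument that mirrors \cite{ADG13}, combined with the derivation of the energy estimate by testing the three equations with $\bv$, $\mu$, and $\tfrac{\p-\p_k}{h}$ respectively.

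\emph{Setup of the fixed-point map.} For $\bw \in L^r_\sigma(\Omega)$ with $r$ large enough that $\bw\cdot\nabla P_h\p_k \in L^2(\Omega)$ (possible since $P_h\p_k \in H^2(\Omega)$) and $\tau \in [0,1]$, I would first solve the Cahn-Hilliard subsystem
\begin{equation*}
  \frac{\p - \p_k}{h} + \tau \bw \cdot \nabla P_h\p_k = \operatorname{div}(m(P_h\p_k)\nabla\mu),
\end{equation*}
together with \eqref{timediscretizationline3}, for the unknowns $\p \in \mathcal{D}(\partial F_h)$ and $\mu \in H^1(\Omega)$. Theorem~\ref{thm:Regularity} yields the representation $\mu - \mu_\Omega = \partial F_h(\p) - \kappa P_0\bigl(\tfrac{\p + \p_k}{2}\bigr)$, so the system reduces to an equation for $\p$ on the affine subspace $\{\p_\Omega = (\p_k)_\Omega\}$ in which the singular nonlinearity is wrapped inside a maximal monotone operator on $L^2_{(m)}(\Omega)$. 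Existence of $\p$ follows from classical perturbation theory for maximal monotone operators; Lemma~\ref{derivativeforchempot} reconstructs $\mu_\Omega$ through the $\psi\equiv 1$ test and controls $\Psi_0'(\p)$. Next, given $(\p,\mu)$, solve the linearized Stokes-type problem for $\bv \in H^1_0(\Omega)^d \cap L^2_\sigma(\Omega)$ obtained by replacing $\bv\otimes\bv$ with $\bw\otimes\bv$ in \eqref{equivtimediscretizationline1} and scaling the forcing by $\tau$; the bilinear form is coercive thanks to $\int 2\eta(\p_k)|D\bv|^2 \geq 2m_0\|D\bv\|_2^2$ and Korn's inequality, so Lax-Milgram applies. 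This defines $F_\tau(\bw) := \bv$, with $F_0 \equiv 0$.

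\emph{Compactness, continuity, and a priori bound.} The map $F_\tau$ is jointly continuous in $(\bw,\tau)$ and compact because the fixed space $\bv \in H^1_0$ embeds compactly into $L^r$ for admissible $r$. For the a priori bound on fixed points $\bv = F_\tau(\bv)$, I would test the three equations with $(\bv,\mu,\tfrac{\p-\p_k}{h})$. The discrete product rule
\begin{equation*}
  \tfrac{1}{h}(\rho\bv - \rho_k\bv_k,\bv) - \tfrac{1}{2h}(\rho - \rho_k,|\bv|^2) = \tfrac{E_{\text{kin}}(\p,\bv) - E_{\text{kin}}(\p_k,\bv_k)}{h} + \tfrac{1}{2h}\|\sqrt{\rho_k}(\bv - \bv_k)\|_2^2,
\end{equation*}
together with the elementary inequality $a\cdot(a-b) \geq \tfrac{1}{2}(|a|^2 - |b|^2 + |a-b|^2)$ applied to $(\nabla\p,\nabla\p_k)$ and to the bilinear form $\E$, the convexity inequality $\Psi_0'(\p)(\p - \p_k) \geq \Psi_0(\p) - \Psi_0(\p_k)$, and the key cancellation
\begin{equation*}
  -h\int_\Omega (P_h\p_k)\nabla\mu\cdot\bv \, dx - h\int_\Omega (\bv\cdot\nabla P_h\p_k)\mu \, dx = -h\int_\Omega \bv\cdot\nabla(P_h\p_k\,\mu)\, dx = 0
\end{equation*}
(using $\operatorname{div}\bv = 0$ and $\bv|_{\partial\Omega} = 0$, which is precisely why the form \eqref{equivtimediscretizationline1} was introduced), convert the three tested equations into a telescoping energy inequality. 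This yields a bound on $\bv$ uniform in $\tau \in [0,1]$, and Leray-Schauder produces a fixed point of $F_1$.

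\emph{Energy estimate.} Repeating the same computation on the true solution ($\tau = 1$) delivers the stated inequality \eqref{discreteenergyestimate}: the dissipation terms $h\int 2\eta(\p_k)|D\bv|^2$ and $h\int m(P_h\p_k)|\nabla\mu|^2$ survive together with the numerical-viscosity contributions $\tfrac{1}{2}\|\sqrt{\rho_k}(\bv - \bv_k)\|_2^2$, $\tfrac{1}{2}\E(\p - \p_k, \p - \p_k)$, and $\tfrac{h}{2}\|\nabla(\p - \p_k)\|_2^2$. The main technical obstacle is the Cahn-Hilliard solve: one must combine the nonlocal operator $\LL$ with the singular nonlinearity $\Psi_0'$ in the presence of the maximal-monotone framework, which is exactly what Theorem~\ref{thm:Regularity} provides with $\theta = h > 0$. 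The $h$-regularization in \eqref{timediscretizationline3} is what makes the resolvent problem for $\p$ well-posed at the discrete level and supplies the additional $\|\nabla(\p - \p_k)\|_2^2$ control needed in the energy estimate.
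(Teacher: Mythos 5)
Your derivation of the energy estimate \eqref{discreteenergyestimate} matches the paper's: test \eqref{equivtimediscretizationline1} with $\bv$, \eqref{timediscretizationline2} with $\mu$, \eqref{timediscretizationline3} with $\tfrac1h(\p-\p_k)$, use the discrete kinetic-energy identity, the cancellation between the capillary and transport terms, the convexity inequality for $\Psi_0$, and $a\cdot(a-b)=\tfrac12(|a|^2-|b|^2+|a-b|^2)$ both for $\nabla\p$ and for the bilinear form $\E$. That part is correct.

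The existence part, however, takes a genuinely different route from the paper, and there is a gap in it. The paper keeps the triple $(\bv,\p,\mu)$ together and writes the system as $\La_k(\bw)=\F_k(\bw)$, where $\La_k$ is invertible precisely because its three components are \emph{decoupled}: a Stokes-type operator for $\bv$, a linear elliptic problem for $\mu$, and the $L^2$-resolvent $I+\partial F_h$ for $\p$. All nonlinear coupling --- in particular the Cahn--Hilliard constitutive relation linking $\mu$ and $\p$ --- is pushed into the compact lower-order perturbation $\F_k$. You instead propose a nested scheme: given $\bw$, solve the full time-discrete Cahn--Hilliard subsystem for $(\p,\mu)$, then solve a linearized momentum equation for $\bv=F_\tau(\bw)$.

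The gap is in the Cahn--Hilliard subsolve. Eliminating $\mu$ via Theorem~\ref{thm:Regularity} leaves the equation
\begin{equation*}
  \frac{\p-\p_k}{h}+\tau\,\bw\cdot\nabla P_h\p_k
  = \operatorname{div}\Bigl(m(P_h\p_k)\,\nabla\bigl(\partial F_h(\p)-\kappa P_0\tfrac{\p+\p_k}{2}\bigr)\Bigr),
\end{equation*}
and you claim the nonlinearity is ``wrapped inside a maximal monotone operator on $L^2_{(m)}(\Omega)$.'' But the composition $\p\mapsto -\operatorname{div}\bigl(m\nabla\partial F_h(\p)\bigr)$ is \emph{not} maximal monotone on $L^2_{(m)}(\Omega)$ with its usual inner product: monotonicity of $\partial F_h$ in $L^2$ does not transfer through the second-order outer operator. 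To make this work one must switch pivot space to the weighted $H^{-1}$-type space in which $-\operatorname{div}(m\nabla\cdot)$ with Neumann condition is the Riesz isomorphism, identify the resulting operator as a subgradient there, handle the non-monotone $-\kappa$-perturbation, and establish continuity of the map $(\bw,\tau)\mapsto(\p,\mu)$ needed for $F_\tau$ to be continuous --- exactly the machinery the paper's choice of $\La_k$ is designed to sidestep, since there $\p$ is recovered from the honest $L^2$-resolvent $(I+\partial F_h)^{-1}$ alone, whose continuity is proved directly in the text. Your route can be repaired (it is the standard $H^{-1}$-gradient-flow view of Cahn--Hilliard), but as written the crucial step is asserted, not justified.
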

\begin{proof}
As first step we prove the energy estimate \eqref{discreteenergyestimate} for any solution 
$(\bv,\p,\mu) \in \left( H^1_0(\Omega)^d \cap L^2_\sigma(\Omega) \right) \times \mathcal{D}(\partial F_h) \times H^2_n(\Omega)$ 
of \eqref{timediscretizationline2}-\eqref{timediscretizationline3} and 
\eqref{equivtimediscretizationline1}.

We choose   $\bpsi = \bv$ in \eqref{equivtimediscretizationline1} and use that
\begin{align*}
 \int_\Omega \left( (\mbox{div} \, \tbj) \frac{\bv}{2} + \left( \tbj \cdot \nabla \right) \bv \right) \cdot \bv \, dx
  = \int_\Omega \mbox{div} \left( \tbj \frac{|\bv|^2}{2} \right) \, dx = 0. 
\end{align*}
Then we derive as in \cite[Proof of Lemma 4.3]{ADG13}
\begin{align*}
 \int_\Omega & \left( \mbox{div}(\rho(P_h \p_k) \bv \otimes \bv) - (\nabla \rho(P_h \p_k) \cdot \bv) \frac{\bv}{2} \right) \cdot \bv \, dx 
 = \int_\Omega \mbox{div} \left( \rho(P_h \p_k) \bv \frac{|\bv|^2}{2} \right) \, dx 
 = 0 \,,
\end{align*}
due to $\operatorname{div} \bv = 0$. Next 
one easily gets 
\begin{align*}
 \frac{1}{h} \left(\rho \bv - \rho_k \bv_k \right) \cdot \bv = \frac{1}{h} \left( \rho \frac{|\bv|^2}{2} - \rho_k \frac{|\bv_k|^2}{2} \right) 
        + \frac{1}{h} \, (\rho - \rho_k) \, \frac{|\bv|^2}{2} 
        + \frac{1}{h} \rho_k \frac{|\bv-\bv_k|^2}{2} \,.
\end{align*}
Therefore \eqref{equivtimediscretizationline1} with $\bpsi = \bv$ yields 
\begin{align} \label{testline1}
 0 = \int_\Omega \frac{\rho |\bv|^2 - \rho_k |\bv_k|^2}{2h} \, dx
 + \int_\Omega \rho_k \frac{|\bv-\bv_k|^2}{2h} \, dx
 + \int_\Omega 2 \eta(\p_k) |D\bv|^2 \, dx
 + \int_\Omega P_h \p_k \nabla \mu \cdot \bv \, dx \,.
\end{align}
Moreover, multiplying \eqref{timediscretizationline2} with $\mu$ and using the boundary condition for $\mu$, one concludes
\begin{align} \label{testline2}
 0 = \int_\Omega \frac{\p - \p_k}{h} \, \mu \, dx
  + \int_\Omega (\bv \cdot \nabla P_h \p_k) \, \mu \, dx
  + \int_\Omega m(P_h \p_k) |\nabla \mu|^2 \, dx \,.
\end{align}
Furthermore choosing $\psi=\frac{1}{h} (\p - \p_k)$ in  \eqref{timediscretizationline3} we obtain
\begin{align} \label{testline3}
 0 = & \int_\Omega \nabla \p \cdot \nabla (\p - \p_k) \, dx
        + \int_\Omega {\Psi}_0'(\p) \frac{\p - \p_k}{h} \, dx 
        + \frac{1}{h} \E(\p, \p-\p_k)
        \nonumber \\
     & - \int_\Omega \mu \, \frac{\p - \p_k}{h} \, dx
      - \int_\Omega {\kappa} \frac{\p^2 - \p_k^2}{2h} \, dx \,.
\end{align}
Summation of \eqref{testline1}-\eqref{testline3} yields 
\begin{align*}
 0  =& \int_\Omega \frac{\rho |\bv|^2 - \rho_k |\bv_k|^2}{2h} \, dx
     + \int_\Omega \rho_k \frac{|\bv-\bv_k|^2}{2h} \, dx
     + \int_\Omega 2 \eta(\p_k) |D\bv|^2 \, dx
     + \int_\Omega m(P_h \p_k) |\nabla \mu|^2 \, dx \\
   & + \int_\Omega {\Psi}_0'(\p) \frac{\p-\p_k}{h} \, dx
     - \int_\Omega {\kappa} \frac{\p^2 - \p_k^2}{2h} \, dx \\
   & + \int_\Omega \nabla \p \cdot \nabla (\p - \p_k) \, dx 
      + \frac{1}{h} \E(\p, \p - \p_k)
   \\
   \geq & \int_\Omega \frac{\rho |\bv|^2 - \rho_k |\bv_k|^2}{2h} \, dx
     + \int_\Omega \rho_k \frac{|\bv-\bv_k|^2}{2h} \, dx
     + \int_\Omega 2 \eta(\p_k) |D\bv|^2 \, dx
     + \int_\Omega m(P_h \p_k) |\nabla \mu|^2 \, dx \\ 
   & + \frac{1}{h} \int_\Omega \left( {\Psi}_0(\p) - {\Psi}_0(\p_k) \right) \, dx 
     - \int_\Omega \frac{{\kappa}}{2} \, \frac{\p^2 - \p_k^2}{h} \, dx \\
   & + \int_\Omega \frac{|\nabla \p - \nabla \p_k|^2}{2} \, dx
     + \int_\Omega \left( \frac{|\nabla \p|^2}{2} - \frac{|\nabla \p_k|^2}{2} \right) dx \\
   &  + \frac{1}{h} \frac{\E(\p, \p)}{2} - \frac{1}{h}\frac{\E(\p_k, \p_k)}{2} + \frac{1}{h}\frac{\E(\p-\p_k, \p-\p_k)}{2} \,,
\end{align*}
because of $\int_\Omega  P_h \p_k \nabla \mu \cdot \bv \, dx= - \int_\Omega (\bv\cdot \nabla P_h \p_k) \mu   \, dx$,
\begin{align*}
 {\Psi}_0'(\p) \, (\p - \p_k) &\geq {\Psi}_0(\p) - {\Psi}_0(\p_k) \,, \\
 \nabla \p \cdot \nabla (\p - \p_k) &= \frac{|\nabla \p|^2}{2} - \frac{|\nabla \p_k|^2}{2} 
    + \frac{|\nabla \p - \nabla \p_k|^2}{2}\,, \quad \mbox{and} \\
\E(\p, \p - \p_k) &= \frac{\E(\p, \p)}{2} - \frac{\E(\p_k, \p_k)}{2} 
+ \frac{\E(\p - \p_k, \p - \p_k)}{2} \, .    
\end{align*}
This shows \eqref{discreteenergyestimate}. 

We will prove existence of weak solutions with the aid of the Leray-Schauder principle. 
In order to obtain a suitable reformulation of our time-discrete system we define suitable
$\La_{k},\F_k : X \to Y$, where
\begin{align*}
 X &= \left(H^1_0(\Omega)^d \cap L^2_\sigma(\Omega) \right) \times \D(\partial F_h) \times H^2_n(\Omega) \,, \\
 Y &= \left(H^1_0(\Omega)^d \cap L^2_\sigma(\Omega) \right)' \times L^2(\Omega) \times L^2(\Omega)
\end{align*}
and
\begin{align*}
 \La_k (\bw) = \begin{pmatrix}
              L_k(\bv) \\
              -\mbox{div}(m(P_h \p_k) \nabla \mu) + \int_\Omega \mu \, dx \\
              \p + \partial F_h(\p)
             \end{pmatrix} 
\end{align*}
for every $\bw = (\bv,\p,\mu) \in X$ and 
\begin{align*}
 \left< L_k(\bv),\bpsi \right> &= \int_\Omega 2 \eta(\p_k) D\bv : D\bpsi \, dx \quad \mbox{for all} \;
     \bpsi \in H^1_0(\Omega)^d \cap L^2_\sigma(\Omega). 
\end{align*}
Moreover  we define 
\begin{align*}
 \F_k(\bw) = \begin{pmatrix}
            - \frac{\rho \bv - \rho_k \bv_k}{h} - \mbox{div}(\rho(P_h \p_k) \bv \otimes \bv) - \nabla \mu P_h \p_k 
              - \left(\operatorname{div} \tbj - \frac{\rho - \rho_k}{h} - \bv \cdot \nabla \rho(P_h \p_k) \right) \frac{\bv}{2}
              - \left( \tbj \cdot \nabla \right) \bv  \\
            -\frac{\p - \p_k}{h} - \bv \cdot \nabla P_h \p_k + \int_\Omega \mu \, dx \rule{0cm}{0,5cm} \\
            \p + \mu + \tilde{\kappa} \frac{\p+\p_k}{2} \rule{0cm}{0,6cm}
           \end{pmatrix} 
\end{align*}
for $\bw = (\bv,\p,\mu) \in X$.
By construction $\bw = (\bv,\p,\mu) \in X$ is a solution of \eqref{timediscretizationline1}-\eqref{timediscretizationline3} if and only if
\begin{align*}
 \La_k (\bw) - \F_k(\bw) = 0 \,.
\end{align*}

In \cite[Section~4.2]{ADG13} it is shown that
\begin{equation*}
  L_k \colon H^1_0(\Omega)^d \cap L^2_\sigma(\Omega) \to \left(H^1_0(\Omega)^d \cap L^2_\sigma(\Omega)\right)' 
\end{equation*}
is invertible 
and that for every $f \in L^2(\Omega)$ 
\begin{align} \label{ell1}
 - \mbox{div}(m(P_h \varphi_k) \nabla \mu) + \int_\Omega \mu \, dx = f \, \mbox{ in } \Omega \,, \quad
    \left. \partial_\no \mu \right|_{\partial \Omega} = 0 
\end{align}
has a unique solution $\mu \in H^2_n(\Omega)$. This follows from the Lax-Milgram Theorem and elliptic regularity theory.
Moreover, in  \cite[Section~4.2]{ADG13} the estimate
\begin{align} \label{H2estimatemu}
 \|\mu\|_{H^2(\Omega)} \leq \changes{C_k} \left( \|\mu\|_{H^1(\Omega)} + \|f\|_{L^2(\Omega)} \right) 
\end{align}
is shown.

Because of Theorem~\ref{thm:Regularity}, $\partial F_h$ is maximal monotone and therefore
\begin{align*}
 I + \partial F_h : \mathcal{D}(\partial F_h) \rightarrow L^2(\Omega)
\end{align*}
is invertible. Moreover, $ (I+\partial F_h)^{-1}\colon L^2(\Omega) \to H^{1}(\Omega)$ is continuous, which can be shown as in the proof of Proposition 7.5.5 in~\cite{Abe07}. Since now a nonlocal operator is involved we provide the details for the convenience of the reader. Let $f_l \to_{l\to\infty} f$ in $L^2(\Omega)$ such that 
$f_l = u_l + \partial F(u_l)$ and $f = u + \partial F(u)$ be given. Then 
$u_l \to u$ in $H^1(\Omega)$ since
\begin{align*}
 \|u_l - u\|_{L^2}^2 + h \|\nabla u_l - \nabla u\|_{L^2}^2 + \E(u_l - u, u_l - u) 
   & \leq \|u_l - u\|_{L^2}^2 + \left( \partial F_h(u_l) - \partial F_h(u) , u_l - u \right)_{L^2} \\
   & \leq \|u_l + \partial F_h(u_l) - (u + \partial F_h(u)) \|_{L^2} \, \|u_l - u\|_{L^2} \\
   & \leq \frac{1}{2} \|f_l - f\|_{L^2}^2 + \frac{1}{2} \|u_l - u\|_{L^2}^2 \,.
\end{align*}

Altogether $\La_k : X \to Y$ is invertible with continuous inverse $\La_k^{-1} : Y \to X$.

We introduce the following auxiliary  Banach spaces
\begin{align*}
 \widetilde{X} &\ZuWeis \left( H_0^1(\Omega)^d \cap L_\sigma^2(\Omega) \right) 
                  \times H^{1}(\Omega) \times H^2_n(\Omega) \,, \\
 \widetilde{Y} &\ZuWeis L^{\frac{3}{2}}(\Omega)^d \times W^{1}_{\frac{3}{2}}(\Omega) \times H^1(\Omega) \,
\end{align*}
in order to obtain a completely continuous mapping in the following.
Because of the considerations above $\La_k^{-1} : Y \to \widetilde{X}$ is continuous.
Because of the compact embedding $\widetilde{Y} \hookrightarrow \hookrightarrow Y$, 
$\La_k^{-1} : \widetilde{Y} \to \widetilde{X}$ is compact.

Next we show that $\F_k : \widetilde{X} \to \widetilde{Y}$ is 
continuous and bounded. To this end one uses the estimates:
\begin{align*}
  \|\rho \bv\|_{L^{\frac{3}{2}}(\Omega)} &\leq C \|\bv\|_{H^1(\Omega)} (\|\varphi\|_{L^2(\Omega)} + 1) \,, &
  \|\mbox{div}(\rho(P_h \p_k) \bv \otimes \bv) \|_{L^{\frac{3}{2}}(\Omega)}  &\leq C_k \|\bv\|^2_{H^1(\Omega)} \,, \\
  \|\nabla \mu P_h \p_k \|_{L^{\frac{3}{2}}(\Omega)}  &\leq C_k \|\nabla \mu\|_{L^2(\Omega)} \,, &
  \|(\operatorname{div}\tbj) \bv\|_{L^{\frac{3}{2}}(\Omega)} &\leq C_k \|\bv\|_{H^1(\Omega)} \|\mu\|_{H^2(\Omega)} \,, \\
  \|(\tbj \cdot \nabla) \bv\|_{L^{\frac{3}{2}}(\Omega)} &\leq C \|\bv\|_{H^1(\Omega)} \|\mu\|_{H^2(\Omega)} \,, &
  \|\bv \cdot \nabla \p_k\|_{W^{1}_{\frac{3}{2}}(\Omega)} &\leq C_k \|\bv\|_{H^1(\Omega)} \,.
\end{align*}
 Note that $P_h \varphi_k$ and therefore $\rho(P_h \p_k)$ 
belong to $H^2(\Omega)$).
More precisely:
\begin{enumerate}
 \item For the estimate of $\operatorname{div}(\rho (P_h \p_k) \bv \otimes \bv)$ in $L^{\frac{3}{2}}(\Omega)$, one has to estimate a term of the form $\rho(P_h \p_k) \partial_l \bv_i \bv_j$
       in $L^{\frac{3}{2}}(\Omega)$, which are a product of functions in $L^\infty(\Omega)$, $L^2(\Omega)$ and $L^6(\Omega)$. Therefore the term is bounded in $L^{\frac{3}{2}}(\Omega)$.
       Moreover, there are terms of the form  $\partial_l \rho(P_h \p_k) \bv_i \bv_j$ in $L^{\frac{3}{2}}(\Omega)$, where each factor belongs to $L^6(\Omega)$.
     \item To estimate $(\operatorname{div}\tbj) \bv$ in $L^{\frac{3}{2}}(\Omega)$ one has terms of the form $m'(P_h \p_k) \partial_i P_h p_k \partial_j \mu \bv_l$ and of the form $m(P_h \p_k) \partial_i \partial_j \mu \bv_l$. For the first type of terms the first factor is in $L^\infty(\Omega)$ and the other three are in $L^6(\Omega)$, which yields the bound in $L^{\frac32}(\Omega)$. The second type are products of  functions
       in $L^\infty(\Omega)$, $L^2(\Omega)$ and $L^6(\Omega)$.
 \item The bound of $(\tbj \cdot \nabla) \bv$ in $L^{\frac{3}{2}}(\Omega)$ follows easily since the factors in $m(P_h \p_k) \partial_i \mu \partial_j \bv_l$ 
       are bounded in $L^\infty(\Omega)$, $L^6(\Omega)$ and $L^2(\Omega)$, respectively.
     \end{enumerate}
     The estimates of the other terms are more easy and left to the reader.
These estimates show the boundedness of $ \F_k $. Using analogous estimates for differences of the terms, one can show the continuity of $ \F_k : \widetilde{X} \to \widetilde{Y} $.

We will now apply the Leray-Schauder principle on $\widetilde{Y}$. To this end we use that  
$\La_k(\bw) - \F_k(\bw) = 0$ for  $\bw \in X$ is equivalent to 
\begin{align} \label{abstractproblem}
 \boldf - \F_k \circ \La_k^{-1}(\boldf) = 0 \quad \mbox{ for }\; \boldf = \La_k(\bw) \,.
\end{align}
Therefore we define $\K_k := \F_k \circ \La_k^{-1} : \widetilde{Y} \to \widetilde{Y}$. We remark that $\K_k$ is 
a compact operator since $\La_k^{-1} : \widetilde{Y} \to \widetilde{X}$ is compact and $\F_k : \widetilde{X} \to \widetilde{Y}$ is continuous. 
Hence \eqref{abstractproblem} is equivalent to the fixed-point equation 
\begin{align*}
\boldf = \K_k(\boldf) \,\qquad \text{for }\boldf \in \tilde{Y}.
\end{align*}
Now we have to show that there is some $R>0$ such that:
\begin{align} \label{requirementLeraySchauder}
 \mbox{ If } \, \boldf \in \widetilde{Y} \, \mbox{ and } \, 0 \leq \lambda \leq 1 
  \, \mbox{ fulfill } \, \boldf = \lambda \K_k \boldf \,, \mbox{ then } \, \|\boldf \|_{\widetilde{Y}} \leq R \,.
\end{align}
To this end we assume that  $\boldf \in \widetilde{Y}$ and $0 \leq \lambda \leq 1$ are such that $\boldf = \lambda \K_k \boldf$. 
Let $\bw = \La_k^{-1}(\boldf)$. Then
\begin{align*}
 \boldf = \lambda \K_k (\boldf) \quad \Longleftrightarrow \quad \La_k(\bw) - \lambda \F_k(\bw) = 0 \,.
\end{align*}
The latter equation is equivalent to 
\begin{align}
 &\int_\Omega 2 \eta(\p_k) D\bv : D\bpsi \, dx
  + \lambda \int_\Omega \frac{\rho \bv - \rho_k \bv_k}{h} \cdot \bpsi \, dx
  + \lambda \int_\Omega \mbox{div}(\rho(P_h \p_k) \bv \otimes \bv) \cdot \bpsi \, dx \nonumber \\
  & \hspace*{10pt} + \lambda \int_\Omega \left( \mbox{div} \tbj - \frac{\rho - \rho_k}{h} - \bv \cdot \nabla \rho(P_h \p_k) \right) \frac{\bv}{2} \cdot \bpsi \, dx
  + \lambda \int_\Omega \left( \tbj \cdot \nabla \right) \bv \cdot \bpsi \, dx\nonumber\\
  & = - \lambda \int_\Omega \nabla \mu P_h \p_k \cdot \bpsi \, dx \label{lambdaproblem1}
\end{align}\
for all $\bpsi \in H_0^1(\Omega)^d \cap L^2_\sigma(\Omega)$ and 
\begin{align}
 & \lambda \frac{\p - \p_k}{h} 
  + \lambda \bv \cdot \nabla P_h \p_k  
  - \lambda \int_\Omega \mu \, dx
  = \mbox{div}(m(P_h \p_k) \nabla \mu)  
  - \int_\Omega \mu \, dx \,, \label{lambdaproblem2} \\
 & \p + \partial F_h(\p) 
  = \lambda \p + \lambda \mu 
                               + \lambda \widetilde{\kappa} \frac{\p + \p_k}{2} \,. \label{lambdaproblem3} 
\end{align}

{\allowdisplaybreaks
As in the proof of  \eqref{discreteenergyestimate} we choose 
$\bpsi = \bv$ in \eqref{lambdaproblem1}, test \eqref{lambdaproblem2} with $\mu$ and multiply 
\eqref{lambdaproblem3} with $\frac{1}{h}(\p-\p_k)$. In the same way as before one obtains:
\begin{align*}
 0 \; = \; & \lambda \frac{1}{h} \int_\Omega \left( \frac{\rho |\bv|^2}{2} - \frac{\rho_k |\bv_k|^2}{2} \right) 
      + \lambda \frac{1}{h} \int_\Omega \rho_k \frac{|\bv - \bv_k|^2}{2}
      + \int_\Omega 2 \eta(\p_k)  |D\bv|^2 
      + (1-\lambda) \left( \int_\Omega \mu \right)^2 \\
   &  + \int_\Omega m(\p_k) |\nabla \mu|^2 
      + (1-\lambda) \frac{1}{h} \int_\Omega \p( \p-\p_k )
      + \int_\Omega \nabla \p \cdot \left( \nabla \p - \nabla \p_k \right) \\
   &  +\frac1h \E(\p, \p - \p_k)
       + \frac{1}{h} \int_\Omega {\Psi}_0'(\p) (\p - \p_k)
      - \lambda \frac{1}{h} \int_\Omega {\kappa} \frac{\p^2 - \p_k^2}{2} \\
 \; \geq \; & \lambda \frac{1}{h} \int_\Omega \left( \frac{\rho |\bv|^2}{2} - \frac{\rho_k |\bv_k|^2}{2} \right) 
      + \lambda \frac{1}{h} \int_\Omega \rho_k \frac{|\bv - \bv_k|^2}{2}
      + \int_\Omega 2 \eta(\p_k)  |D\bv|^2 
      + (1-\lambda) \left( \int_\Omega \mu \right)^2 \\
   &  + \int_\Omega m(\p_k) |\nabla \mu|^2 
      + (1-\lambda) \frac{1}{h} \int_\Omega \left( \frac{\p^2}{2} - \frac{\p_k^2}{2} \right)
      + \int_\Omega \left( \frac{|\nabla \p|^2}{2} - \frac{|\nabla \p_k|^2}{2} \right) \\
   & +\frac{1}{h} \frac{\E(\p, \p)}{2} - \frac{1}{h}\frac{\E(\p_k, \p_k)}{2} 
   + \frac{1}{h}\frac{\E(\p - \p_k, \p - \p_k)}{2} \\
   &  + \frac{1}{h} \int_\Omega \left( {\Psi}_0(\p) - {\Psi}_0(\p_k) \right)
      - \lambda \frac{1}{h} \int_\Omega {\kappa} \frac{\p^2 - \p_k^2}{2} \,.
\end{align*}}
For brevity we omitted the integration element $dx$.
Thus we obtain
\begin{align*}
 & h \int_\Omega 2 \eta(\p_k) |D\bv|^2 + h \int_\Omega m(\p_k) |\nabla \mu|^2 
 + \frac{h}{2} \int_\Omega |\nabla \p|^2 \\
 & + \int_\Omega {\Psi}(\p) + (1-\lambda)\left(\int_\Omega \mu \, dx \right)^2 + \frac{\E(\p, \p)}{2} \\
 & \leq \int_\Omega \frac{\rho_k |\bv_k|^2}{2} + \frac{1}{2} \int_\Omega \p_k^2 
 + \frac{h}{2} \int_\Omega |\nabla \p_k|^2 + \int_\Omega {\Psi}_0(\p_k)
 + \int_\Omega |{\kappa}| \frac{\p_k^2}{2} + \frac{\E(\p_k, \p_k)}{2} \,.
\end{align*}
Here we used $-\lambda \int_\Omega \widetilde{\kappa} \frac{\p_k^2}{2} \, dx \leq
\lambda \int_\Omega |\widetilde{\kappa}| \frac{\p_k^2}{2} \, dx$ and in addition estimated every 
$\lambda$ resp. $(1-\lambda)$ on the right side by $1$.
Because of $\bw=(\bv,\p,\mu) = \La_k^{-1}(\boldf) \in X$,
 $\p \in \D(\partial F_h)$ and therefore $\p \in [-1,1]$ almost everywhere. In particular we have $\rho \geq 0$. Moreover, 
$\int_\Omega {\Psi}(\p) \, dx$ is bounded.

Altogether we conclude 
\begin{align} \label{apriorilambda}
\nonumber& (1-\lambda)\left(\int_\Omega \mu \, dx \right)^2 + h \int_\Omega 2 \eta(\p_k) |D\bv|^2 \, dx + h \int_\Omega m(\p_k) |\nabla \mu|^2 \, dx \\
& + \frac{h}{2} \int_\Omega |\nabla \p|^2 \, dx + \frac{\E(\p, \p)}{2} \leq C_k \,. 
\end{align}
for some $C_k$ independent of $(\bv, \p,\mu)$.
Using $\|\p\|_{L^\infty} \leq 1$, Korn's inequality, \eqref{eq:EquivNorm2},
and the fact that $\eta$, $m$ and $a$ are bounded
from below by a positive constant, we obtain
\begin{align} \label{apriorilambdashort}
 \sqrt{1-\lambda}\left|\int_\Omega \mu \, dx \right| + \|\bv\|_{H^1(\Omega)} + \|\nabla \mu\|_{L^2(\Omega)} + \|\p\|_{H^1(\Omega)} \leq C_{k} \,.
\end{align}
In order to estimate $\|\mu\|_{L^2}$, we distinguish the cases $\lambda\in [\frac12,1]$ and $\lambda\in [0,\frac12)$. In the case 
$\lambda \in [\frac{1}{2},1]$, we simply use
$\frac{1}{2} | \int_\Omega \mu \, dx| \leq \lambda |\int_\Omega \mu \, dx|$
and conclude as in the proof of Lemma \ref{derivativeforchempot} together with \eqref{apriorilambdashort} from 
\eqref{lambdaproblem3} that
\begin{align*}
 \left| \int_\Omega \mu \, dx \right| \leq C_k \,.
\end{align*}
In the case $\lambda \in [0,\frac{1}{2})$ we conclude directly from \eqref{apriorilambdashort}  that
$\left| \int_\Omega \mu \, dx \right| \leq C_k$.
Thus \eqref{apriorilambdashort} can be improved to
\begin{align} \label{apriorilambdashortfinal}
 \|\bv\|_{H^1(\Omega)} + \|\mu\|_{H^1(\Omega)} + \|\p\|_{H^1(\Omega)} \leq C_k \,.
\end{align}
With the help of \eqref{H2estimatemu} we can estimate $\|\mu\|_{H^2(\Omega)}$ and derive
\begin{align} \label{apriorilambdashortfinally}
 \|\bv\|_{H^1(\Omega)} + \|\mu\|_{H^2(\Omega)} + \|\p\|_{H^1(\Omega)} \leq C_k \,.
\end{align}
Using \eqref{lambdaproblem3} we also have $\| \partial F_h(\p) \|_{L^2(\Omega)} \leq C_k$.
Altogether we conclude
\begin{align*}
 \|\bw\|_{\widetilde{X}} + \|\partial F_h(\p)\|_{L^2(\Omega)} 
 = \|(\bv,\p,\mu)\|_{\widetilde{X}} + \|\partial F_h(\p)\|_{L^2(\Omega)} \leq C_k \,.
\end{align*}
Finally we can estimate $\boldf = \La_k(\bw)$ in $\widetilde{Y}$ by using that
$\boldf - \lambda \F_k \La_k^{-1} (\boldf) = 0$ implies $\boldf = \lambda \F_k(\bw)$ together with the boundedness of
$\F_k : \widetilde{X} \to \widetilde{Y}$. Thus we obtain
\begin{align*}
 \|\boldf\|_{\widetilde{Y}} = \|\lambda \F_k(\bw) \|_{\widetilde{Y}} \leq C'_k \,.
\end{align*}
Thus the condition of the Leray-Schauder principle is satisfied, which proves the existence of a solution.
\end{proof}

\section{Proof of Theorem~\ref{existenceweaksolution}}\label{sec:proof}
\subsection{Compactness in Time}

In order to prove our main result Theorem \ref{existenceweaksolution} we send $h \to 0$ 
resp. $N \to \infty$ for the approximate solution, which are obtain by suitable interpolations of our time-discrete solutions. To this end let $N \in \mathbb{N}$ be given and let 
$(\bv_{k+1},\p_{k+1},\mu_{k+1})$, $k\in \N$, be chosen successively as a solution of 
\eqref{timediscretizationline1}-\eqref{timediscretizationline3} with $h = \frac{1}{N}$ and $(\bv_0,\varphi_0^N)$ \changes{where $ \varphi_0^N = P_h \varphi_0 $} as initial value. 

As in \cite{ADG13}  we define $f^N(t)$ for $t\in[-h,\infty)$ by the relation $f^N(t) = f_k$ for $t \in [(k-1)h,kh)$, where
$k \in \mathbb{N}_0$ and $f \in \{\bv,\p,\mu\}$. 
Moreover, let $\rho^N = \frac{1}{2}(\tilde{\rho}_1 + \tilde{\rho}_2) + \frac{1}{2}(\tilde{\rho}_2 - \tilde{\rho}_1) \varphi^N$.
Furthermore we introduce the notation
\begin{align*}
 \left( \Delta^+_h f \right)(t) &:= f(t+h)-f(t) \,,&   \left( \Delta^-_h f \right)(t) &:= f(t)-f(t-h) \,, \\
 \partial^\pm_{t,h} f(t) &:= \frac{1}{h} \left( \Delta_h^\pm f \right)(t) \,, & 
 f_h &:= \left( \tau_h^\ast f\right)(t) = f(t-h) \,. 
\end{align*}
In order to derive the weak formulation in the limit let $\bpsi \in \left( C^\infty_0(\Omega \times (0,\infty)) \right)^d$ with $\operatorname{div} \bpsi = 0$ be arbitrary and choose
$\widetilde{\bpsi} := \int_{kh}^{(k+1)h} \bpsi \, dt$ as test function in 
\eqref{timediscretizationline1}. By summation with respect to $k \in \mathbb{N}_0$ this yields
\begin{align} 
  \int_0^\infty &\hspace*{-5pt} \int_\Omega \partial^-_{t,h}(\rho^N \bv^N) \cdot \bpsi \, dx \,dt
  + \int_0^\infty \hspace*{-5pt} \int_\Omega \mbox{div} \left(\rho^N_h \bv^N \otimes \bv^N \right) \cdot \bpsi \, dx \,dt
  + \int_0^\infty \hspace*{-5pt} \int_\Omega 2 \eta(\p^N_h) D\bv^N : D\bpsi \, dx \,dt \nonumber \\
 & - \int_0^\infty \hspace*{-5pt} \int_\Omega \left( \bv^N \otimes \tbj^N \right) : D\bpsi \, dx \,dt
   = - \int_0^\infty \hspace*{-5pt} \int_\Omega \nabla \mu^N \p^N_h \cdot \bpsi \, dx \,dt \label{timeintegratedline1}
\end{align}
for all $\bpsi \in \left( C^\infty_0(\Omega \times (0,\infty)) \right)^d$ with $\operatorname{div} \bpsi = 0$.
\changes{Here $ \rho^N_h = (\rho^N)_h $ and $ \varphi^N_h = (\varphi^N)_h. $}
Using a simple change of variable, one sees
\begin{align*}
 \int_0^\infty \hspace*{-5pt} \int_\Omega \partial^-_{t,h}(\rho^N \bv^N) \cdot \bpsi \, dx \,dt
   = - \int_0^\infty \hspace*{-5pt} \int_\Omega (\rho^N \bv^N) \cdot \partial^+_{t,h}\bpsi \, dx \,dt \,
\end{align*}
for sufficiently small $h>0$.
In the same way one derives
\begin{align} \label{timeintegratedline2}
 \int_0^\infty \hspace*{-5pt} \int_\Omega \partial^-_{t,h} \p^N \, \zeta \, dx \,dt
  + \int_0^\infty \hspace*{-5pt} \int_{\Omega} \bv^N \p^N_h \cdot \nabla \zeta \, dx \,dt
  = \int_0^\infty \hspace*{-5pt} \int_\Omega m(\p^N_h) \nabla \mu^N \cdot \nabla \zeta \, dx \,dt
\end{align}
for all $\zeta \in C^\infty_0((0,\infty);C^1(\overline{\Omega}))$ as well as
\begin{align} \label{timeintegratedline3}
 \int_{0}^{\infty} 
 \int_\Omega (\mu^N + {\kappa} \, \frac{\p^N + \p_h^N}{2})\psi \, dx \, dt 
 = & \int_{0}^{\infty} \E(\p^N,\psi) \,dt + \int_{0}^{\infty} \int_\Omega {\Psi}_0'(\p^N)\psi\, dx \, dt \nonumber\\
& +h \int_{0}^{\infty} \int_\Omega \nabla \p^N \cdot \nabla \psi \, dx \, dt
\end{align}
for all $\psi\in C^\infty_0((0,\infty);C^1(\overline{\Omega}))$.
%

Let $E^N(t)$ be defined as 
\begin{align*}
 E^N(t) = \frac{(k+1)h - t}{h} E_{\mbox{\footnotesize tot}}(\p_k,\bv_k) 
         + \frac{t - kh}{h} E_{\mbox{\footnotesize tot}}(\p_{k+1},\bv_{k+1}) \; \mbox{ for } \; t \in [kh,(k+1)h) 
\end{align*}
and set 
\begin{align*}
 D^N(t) := \int_\Omega 2 \eta(\p_k) |D\bv_{k+1}|^2 \, dx + \int_\Omega m(\p_k) |\nabla \mu_{k+1}|^2 \, dx 
\end{align*}
for all $t \in (t_k,t_{k+1})$, $k \in \mathbb{N}_0$.
Then  \eqref{discreteenergyestimate} yields
\begin{align} \label{inequalityforEandD}
 - \frac{d}{d t} E^N(t) = \frac{E_{\mbox{\footnotesize tot}}(\p_k,\bv_k) - E_{\mbox{\footnotesize tot}}(\p_{k+1},\bv_{k+1})}{h}
    \geq D^N(t) 
\end{align}
for all $t \in (t_k,t_{k+1})$, $k \in \mathbb{N}_0$. 
Integration implies 
\begin{align} 
 E_{\mbox{\footnotesize tot}}(\p^N(t),\bv^N(t)) 
  &+ \int_s^t \int_\Omega \left( 2 \eta(\p^N_h) |D\bv^N|^2 + m(\p^N_h) |\nabla \mu^N|^2 \right) dx \,d\tau \nonumber \\
  &\leq E_{\mbox{\footnotesize tot}}(\p^N(s),\bv^N(s)) \label{inequforEN}
\end{align}
for all $0 \leq s \leq t < \infty$ with $s,t \in h\mathbb{N}_0$. 

Because of Lemma \ref{derivativeforchempot} and since $E_{\mbox{\footnotesize tot}}(\varphi_0^N,\bv_0)$
is bounded, we conclude that 
\begin{align} \label{timediscrbounds}
 \begin{array}{l}
  (\bv^N)_{N\in \N} \subseteq L^2(0,\infty;H^1(\Omega)^d)\cap L^\infty(0,\infty; L^2(\Omega)^d) \,, \\
  (\nabla \mu^N)_{N\in\N} \subseteq L^2(0,\infty;L^2(\Omega)^d) \,, \rule{0cm}{0,5cm}\\
  (\varphi^N)_{N\in\N} \subseteq L^\infty(0,\infty;H^{\frac{\alpha}{2}}(\Omega)) \,, \mbox{ and } \rule{0cm}{0,5cm} \\
   (h^{\frac12}\nabla \p^N)_{N\in\N} \subseteq L^{\infty}(0, \infty;L^2(\Omega)) 
 \end{array}
\end{align}
are bounded. Moreover, there is
a nondecreasing $C\colon (0,\infty) \to (0,\infty)$ such that
\begin{equation*}
  \int_0^T \left| \int_\Omega \mu^N \, dx \right| dt \leq C(T) \, \mbox{ for all } \, 0<T<\infty\,. \rule{0cm}{0,5cm}
\end{equation*}
Therefore there are subsequences (denoted again by the index $N\in\N$, $h>0$, respectively) such that
\begin{align*}
  \bv^N \rightharpoonup \bv \; &\mbox{ in } \, L^2(0,\infty;H^1(\Omega)^d) \,, \\
  \bv^N \rightharpoonup^\ast \bv \; &\mbox{ in } \, L^\infty(0,\infty;L^2(\Omega)^d) 
                                      \,, \\
  \varphi^N \rightharpoonup^\ast \varphi \; &\mbox{ in } \, L^\infty(0,\infty;H^{\frac{\alpha}{2}}(\Omega)) 
                                              \,, \\
  \mu^N \rightharpoonup \mu \; &\mbox{ in } \, L^2(0,T;H^1(\Omega)) \, \mbox{ for all } \, 0<T<\infty \,, \\
  \nabla \mu^N \rightharpoonup \nabla \mu \; &\mbox{ in } \, L^2(0,\infty;L^2(\Omega)^d) \,,
\end{align*}
where $\mu\in L^2_{uloc}([0,\infty);H^1(\Omega))$.

In the following $\widetilde{\varphi}^N$ denotes the piecewise linear interpolant of $\varphi^N(t_k)$ in time, where 
$t_k = kh$, $k \in \mathbb{N}_0$. 
Then  $\partial_t \widetilde{\varphi}^N = \partial_{t,h}^- \varphi^N$ and therefore
\begin{align} \label{estofdiffint}
 \|\widetilde{\varphi}^N - \varphi^N \|_{H^{-1}(\Omega)} \leq h \|\partial_t \widetilde{\varphi}^N\|_{H^{-1}(\Omega)} \,.
\end{align}
Using that $\bv^N \varphi^N$ and $\nabla \mu^N$ are bounded in $L^2(0,\infty; L^2(\Omega)^d)$ and \eqref{timeintegratedline2} we conclude that $\partial_t \widetilde{\varphi}^N \in L^2(0,\infty; \changes{H_{(0)}^{-1}}(\Omega))$
is bounded. 
Since  $(\varphi^N)_{N\in\N}$ and therefore $(\widetilde{\varphi}^N)_{N\in\N}$ are bounded in $L^\infty(0,\infty;H^{\frac{\alpha}{2}}(\Omega))$, the Lemma of Aubin-Lions yields  
\begin{align}\label{p-convergence1}
 \widetilde{\varphi}^N \to \widetilde{\varphi} \; \mbox{ in } \; L^2(0,T;L^2(\Omega)) 
\end{align}
for all $0<T<\infty$ for some $\widetilde{\varphi} \in L^\infty(0,\infty;L^2(\Omega))$ (and a suitable subsequence). 
In particular $\widetilde{\varphi}^N(x,t) \to \widetilde{\varphi}(x,t)$
 almost every $ (x,t)\in \Omega\times (0,\infty)$. 
Because of  \eqref{estofdiffint},
\begin{align}\label{p-convergence2}
\| \widetilde{\varphi}^N - \varphi^N \|_{L^2(-h, \infty;H^{-1}(\Omega))} \to 0 
\end{align}
and thus $\widetilde{\varphi} = \varphi$.
Since $\widetilde{\varphi}^N \in H^1_{\mbox{\footnotesize uloc}}([0,\infty);H^{-1}(\Omega)) 
\cap L^{\infty}([0,\infty);H^{\frac{\alpha}{2}}(\Omega))
\hookrightarrow BUC([0,\infty);L^2(\Omega))$ and $\widetilde{\varphi}^N \in L^\infty(0,\infty;H^{\frac{\alpha}{2}}(\Omega))$
are bounded, Lemma \ref{lem:CwEmbedding} implies $\varphi \in BC_w([0,\infty);H^{\frac{\alpha}{2}}(\Omega))$. 
Moreover, $ (\widetilde{\p}^N - {\p}^N)_{N\in\N} \subseteq L^\infty(-h,\infty;H^{\frac{\alpha}{2}}(\Omega))$ is bounded
since $ ({\p}^N)_{N\in\N}, (\widetilde{\p}^N)_{N\in\N} \subseteq  L^\infty(-h,\infty;H^{\frac{\alpha}{2}}(\Omega)) $ are bounded. By  interpolation with
\eqref{p-convergence2} we conclude 
\begin{align}\label{p-convergence3}
\widetilde{\p}^N - \p^N \to 0 \; \mbox{in} \; L^2(-h, T;L^2(\Omega)) \,
\end{align}
and therefore 
\begin{align}\label{p-convergence4}
{\p}^N \to \p \; \mbox{in} \; L^2(0, T;L^2(\Omega)) \,
\end{align}
for all $ 0<T<\infty. $   
Moreover, we have
\begin{align}\label{p-convergence5}
&\| \p^N_h - \p \|_{L^2(0, T; L^2(\Omega))} \leq \|\p^N_h - \p_h\|_{L^2(0, T; L^2(\Omega))} + \| \p_h - \p \|_{L^2(0, T;L^2(\Omega))} \nonumber\\
&\leq h^{\frac12}\|\p_0^N\|_{L^2(\Omega)}+\|\p^N - \p \|_{L^2(0, T-h; L^2(\Omega))}+ \| \p_h - \p \|_{L^2(0, T;L^2(\Omega))}.
\end{align}
Because of $\| \p_h - \p \|_{L^2(0, T;L^2(\Omega))}\to_{h\to 0} 0$, 
we obtain $ \| \p^N_h - \p \|_{L^2(0, T; L^2(\Omega))} \to_{h\to 0} 0$.


Finally using the bounds of $\widetilde{\varphi}^N$ in $H^1(0,T;H^{-1}(\Omega))\cap L^\infty(0,T;\changes{H^{\frac{\alpha}{2}}}(\Omega))$ 
for all $0<T<\infty$ as well as $\widetilde{\varphi}^N \to \varphi$ in $L^2(0,T;L^2(\Omega))$ we conclude 
$\widetilde{\varphi}^N(0) \to \varphi(0)$ in $L^2(\Omega)$. Since $\widetilde{\varphi}^N(0)=\varphi_0^N\to_{N\to \infty}\varphi_0$ in $L^2(\Omega)$, we derive $\varphi(0) = \varphi_0$.

Since $\rho^N$  depends affine linearly on $\varphi^N$, the conclusions hold true for $\rho^N$.

To pass to the limit in \eqref{timeintegratedline3}, 
we closely follow the corresponding argument in \cite{ABG15}. The only difference is that \changes{we} work on the space-time domains directly, while they work on the spacial domains fixing a time variable in \cite{ABG15}. We include the argument here for completeness.
\changes{We first observe that 
$ \Psi_0'(\p^N)$ are bounded 
in $L^2_{uloc}([0, \infty);L^2(\Omega))$ using Lemma  
\ref{derivativeforchempot} and the boundedness of $ \nabla  \mu_N $
in $ L^{2}(0, \infty; L^2(\Omega)).$}
Using this bound, we can pass to
a subsequence such that $ \Psi_0'(\p^N) $ converges weakly in   $L^2(0,T;L^2(\Omega))$ to $\chi$ for all $0<T<\infty$ as $N$ tends to infinity. 
Let  $\psi\in C^\infty_0((0,\infty);C^1(\overline{\Omega}))$.
Thanks to the convergences listed above, we can pass to the limit  $N \rightarrow \infty$ in \eqref{timeintegratedline3} to find
\begin{align*}
\int_{0}^{\infty} \int_{\Omega} (\mu + \kappa \p) \psi \,dx \,dt
= \int_{0}^{\infty} \E(\p, \psi) \,dt + (\chi, \psi)_{L^2((0, \infty) \times \Omega)}.
\end{align*} 
To show \eqref{weakline3}, we only have to identify the weak limit
$ \chi = \lim_{N \rightarrow \infty} \Psi_0'(\p^N) $.
Let $ T>0 $. Since \eqref{p-convergence4} holds, passing to a subsequence,
we have $ \p^N \rightarrow \p $ almost everywhere in $ \Omega \times (0, T) $. 
On the other hand, thanks to Egorov's theorem, there exists a set $Q_m
 \subset \Omega \times (0, T)$ such that $ |Q_m| \geq |\Omega \times (0, T)| - \frac{1}{2m} $ and on which $ \p^N \rightarrow \p$ uniformly. We now use (uniform with respect to $N$) estimate on $\Psi_0'(\p^N)$ in $L^2(\Omega \times (0, T))$.
By definition, the quantity
 \begin{align*}
 M_{\delta, N} = \left| \left\{ (x, t) \in \Omega \times (0, T) \mid |\p^N(x, t)| > 1- \delta \right\} \right|
 \end{align*} 
is decreasing in $\delta$ for all $ n \in \N. $   
Since $\Psi_0'(y)$ is unbounded for $ y \rightarrow \pm 1$, we set
$$ c_{\delta} := \inf_{|c| \geq 1- \delta} |\Psi_0'(c)| \rightarrow_{\delta \rightarrow 0} \infty, $$ 
we have by the Tschebychev inequality
\begin{align*}
\int_{\Omega \times (0, T)} | \Psi_0'(\p^N)|^2\,dx\,dt \geq c_{\delta}^2 |M_{\delta, N}|.
\end{align*}
From the uniform (with respect to $N$) estimate of the norm of $ \Psi_0'(\p^N) $ in $ L^2(\Omega \times (0, T)), $ we obtain $ M_{\delta, n} 
\rightarrow 0 $ for $ \delta \rightarrow 0 $ uniformly in $ n \in \N. $
Therefore, we deduce 
\begin{align*}
\lim_{\delta \rightarrow 0} \left| \left\{ (x, t) \in \Omega \times (0, T) \mid |\p^N(x, t)| > 1- \delta \right\} \right|=0
\end{align*}
uniformly in $ N \in \N. $
Thus there exists $ \delta = \delta(m) $ independent of $N$, such that
\begin{align*}
\left| \left\{ (x, t) \in \Omega \times (0, T) \mid | \p^N(x, t) | > 1- \delta \right\} \right| \leq \frac{1}{2m},~~~~\forall N \in \N
\end{align*}
Consider now $ N \in \N $ so large that by uniform convergence we have $ |\p^{N'}(x,t) - \p^N(x,t)| < \frac{\delta}{2}$ for all $N' \geq N$ and all $(x,t)\in  Q_m $. Moreover, let $ Q_{mN}' \subset Q_m $ be defined by  
\begin{align*}
Q_{mN}' = Q_m \cap \left\{ (x, t) \in \Omega \times (0, T) \mid 
| \p^{N}(x, t) | \leq 1 - \delta \right\}.
\end{align*}
By the above construction, we immediately deduce that $ | Q_{mN}' | \geq |\Omega \times (0, T) | - \frac{1}{m} $ and that $ \left| \p^{N'}(x, t) \right| < 1- \frac{\delta}{2} $ for all $ N' \geq N $ and for all $ (x,t) \in Q_{m,N}. $
Therefore by the regularity assumptions on the potential $ \Psi_0', $ we deduce 
that $ \Psi_0'(\p^N) \rightarrow \Psi_0'(\p) $ uniformly on $ Q_{mN}'. $
Since $m $ is arbitrary, we have  $ \Psi_0'(\p^N) \rightarrow \Psi_0'(\p) $ almost everywhere in $ \Omega \times (0, T). $ By a diagonal argument, 
passing to a subsequence, we have  $ \Psi_0'(\p^N) \rightarrow \Psi_0'(\p) $
almost everywhere in $ \Omega \times (0, \infty) $ and $\Psi_0'(\p^N) \rightarrow \Psi_0'(\p)$ as $h\to 0$ in $L^q(Q_T)$ for every $1\leq q<2$ and $0<T<\infty$. Finally, the uniqueness of weak and strong limits gives $ \chi = \Psi_0'(\p) $ as claimed.

Next we show $\bv^N \to \bv$ in $L^2(0,T;L^2(\Omega)^d)$ for all
$0<T<\infty$ and almost everywhere.
 We note that 
 $\partial_t \left(\widetilde{\rho \bv}^N\right) = \partial_{t,h}^- \left(\rho^N \bv^N\right)$ since $\widetilde{\rho \bv}^N$ is the piecewise linear interpolant of $\left(\rho^N \bv^N\right)(t_k)$.
Using that 
\begin{align*}
 \rho^N_h \bv^N \otimes \bv^N & \; \mbox{ is bounded in } \; L^2(0,T;L^\frac{3}{2}(\Omega)) \,, \\
 D\bv^N                       & \; \mbox{ is bounded in } \; L^{2}(0,T;L^2(\Omega)) \,, \\
 \bv^N \otimes \nabla \mu^N          & \; \mbox{ is bounded in } \; L^{\frac{8}{7}}(0,T;L^{\frac{4}{3}}(\Omega)) \,, \\
 \nabla \mu^N \varphi^N_h     & \; \mbox{ is bounded in } \; \changes{L^2(0,T;L^2(\Omega))} \,.
\end{align*}
together with \eqref{timeintegratedline1}, we obtain that 
$\partial_t \left( \mathbb{P}_\sigma(\widetilde{\rho \bv}^N) \right)$ is bounded in 
$L^{\frac{8}{7}}(0,T; (W^1_6(\Omega))')$ for all $0<T<\infty$. 
Here we remark that the boundedness of $\nabla \mu^N \in L^2(0,T;L^2(\Omega))$ and $\varphi^N_h \in \changes{L^\infty(0,T;L^{\infty}(\Omega))}$ imply that $\nabla \mu^N\varphi^N_h \in \changes{L^2(0,T;L^{2}(\Omega))} $ is bounded.

\changes{Since $ \rho^N $ is bounded in $ L^{\infty}(0, T; H^{\frac{\alpha}{2}}(\Omega)^d) $ and $ \bv^N $ is bounded in $ L^2(0, T; H^{1}(\Omega)^d) $,
  using a product rule for Besov spaces, cf.}~\cite{RS96}\changes{,
  suitable Sobolev embeddings and the boundedness of $\mathbb{P}_{\sigma}$ in Sobolev spaces, we have the boundedness of 
$  \mathbb{P}_\sigma (\widetilde{\rho \bv}^N) $  in $ L^2(0,T;H^{\epsilon}(\Omega)^d) $ for some $ \epsilon > 0.$

Hence} the Lemma of Aubin-Lions implies
\begin{align*}
 \mathbb{P}_\sigma (\widetilde{\rho \bv}^N) \to \bw
  \; \mbox{ in } \; L^2(0,T;L^2(\Omega)^d)
\end{align*}
for all $0<T<\infty$ for some $\bw \in L^\infty(0,\infty;L^2(\Omega)^d)$. 
Since the projection $\mathbb{P}_\sigma : L^2(0,T;L^2(\Omega)^d) \to L^2(0,T;L^2_\sigma(\Omega))$ is weakly continuous, we 
conclude from the weak convergence $\widetilde{\rho \bv}^N \rightharpoonup \rho \bv$ 
in $L^2(0,T;L^2(\Omega))$ that $\bw = \mathbb{P}_\sigma (\rho \bv)$.
This yields
\begin{align*}
 \int_0^T \int_\Omega \rho^N |\bv^N|^2 = \int_0^T \int_\Omega \mathbb{P}_\sigma (\rho^N \bv^N ) \cdot \bv^N
 \longrightarrow \int_0^T \int_\Omega \mathbb{P}_\sigma(\rho \bv) \cdot \bv 
  = \int_0^T \int_\Omega \rho \, |\bv|^2 \,
\end{align*}
because of $\mathbb{P}_\sigma(\rho^N \bv^N)\to_{N\to\infty } \mathbb{P}_\sigma (\rho \bv)$ in $L^2(0,T;L^2(\Omega)^d)$.
Since weak convergence and convergence of the norms imply strong convergence in a Hilbert space, we conclude $(\rho^N)^{\frac{1}{2}} \bv^N \to (\rho)^{\frac{1}{2}} \bv$ in $L^2(0,T;L^2(\Omega)^d)$.
Because of 
\begin{align*}
 \rho^N \to \rho \; \mbox{ almost everywhere in } \, (0,\infty) \times \Omega \; \mbox{ and } \;
  |\rho^N| \geq c > 0 \,,
\end{align*}
we derive
\begin{align*}
 \bv^N = (\rho^N)^{-\frac{1}{2}} \left( (\rho^N)^{\frac{1}{2}} \bv^N \right)
  \to_{N\to\infty} \bv \; \mbox{ in } \; L^2(0,T;L^2(\Omega)^d) \,.
\end{align*}
This yields  $\bv^N \to_{N\to\infty} \bv$ almost everywhere in $(0,\infty) \times \Omega$
(for a subsequence). 

Now we can pass to the limit in \eqref{timeintegratedline1},
\eqref{timeintegratedline2} to get \eqref{weakline1}, \eqref{weakline2} with the aid of the previous results using
that for all divergence free $\bpsi$
\begin{align*}
 \int_0^T \int_\Omega \nabla \mu^N P_N \varphi^N_h \cdot \bpsi \, dx\, dt \to_{N\to \infty} 
  \int_0^T \int_\Omega \nabla \mu \varphi \cdot \bpsi \, dx\, dt\,.
\end{align*}
The initial condition $ \bv(0) = \bv_0 $ in $ L^2(\Omega)^d$ is shown in the same
way as in \cite{ADG13}. Therefore we omit the proof.

\changes{Finally, using \eqref{eq:4}, $\Psi'(\varphi)\in L^2_{uloc}([0,\infty); L^2(\Omega))$ and the local regularity result due to \cite[Lemma~4.3]{AK07} we obtain $\varphi \in L^2_{uloc}([0,\infty);H^{\alpha}(\Omega'))$ for every open $\Omega'$ with $\overline{\Omega'}\subseteq \Omega$, i.e., $\varphi \in L^2_{uloc}([0,\infty);H^{\alpha}_{loc}(\Omega))$.}

\subsection{Proof of the Energy Inequality}

It remains to show the energy inequality \eqref{weakline5}.
If we show that $ \p^N(t) \to \p(t) $ in $ H^{\frac{\alpha}{2}}_{(m)} $ for 
almost every $ t \in (0, \infty) $ and $ \sqrt{h} \nabla \p^N \to 0 $ in 
$ (L^2(\Omega))^d $ for almost every $ t \in (0, \infty), $ the rest of the proof is
almost the same as in \cite{ADG13} and we omit it.
To this end it suffices to show 
$ (\p^N, \sqrt{h} \nabla \p^N) $ converges strongly to $ (\p, 0) $ in $L^2(0, T; H^{\frac{\alpha}{2}}_{(m)}(\Omega) \times (L^2(\Omega))^d) $ for every $T>0$.
If we take $ \psi = \p^N $ in \eqref{timeintegratedline3} \changes{(after a standard approximation)}, we have
\begin{align} 
 \int_{0}^{\infty} 
 \int_\Omega \left(\mu^N + {\kappa} \, \frac{\p^N + \p_h^N}{2}\right)\p^N \, dx \, dt 
 = & \int_{0}^{\infty} \E(\p^N,\p^N) \,dt + \int_{0}^{\infty} \int_\Omega {\Psi}_0'(\p^N)\p^N\, dx \, dt \nonumber\\
& +h \int_{0}^{\infty} \int_\Omega \nabla \p^N \cdot \nabla \p^N \, dx \, dt\,.
\end{align}
Since $ \p^N \to \p $ in $ L^2(Q_T) $, $ \mu^N \rightharpoonup \mu$ in $L^2(Q_T) $ and $ \Psi_{0}'(\p^N) \rightharpoonup \Psi_{0}'(\p) $ in $ L^2(Q_T) $ as $ N \to \infty $, we have 
\begin{align}\nonumber
& \lim_{N \to \infty} \left\{ \int_{0}^{\infty} \E(\p^N(t), \p^N(t))\,dt + h \int_{0}^{\infty}
\int_{\Omega} \nabla \p^N\cdot  \nabla \p^N\,dx\,dt \right\} \\\label{limitnorm}
& = \int_{0}^{\infty} \int_{\Omega} (\mu \p + \kappa \p^2)\,dx\,dt - \int_{0}^{\infty} \int_{\Omega} \Psi_{0}'(\p)\p\,dx\,dt 
 = \int_{0}^{\infty} \E(\p(t), \p(t))\,dt
\end{align}
because of \eqref{weakline3}.

Next we show $ \p^N \rightharpoonup \p $ in $ L^2(0, T; H^{\frac{\alpha}{2}}_{(m)}) $
and $ \sqrt{h} \nabla \p^N \rightharpoonup 0 $ in $ L^2(0, T; L^2) $ as $ N \to \infty $ for any $ T>0 $. Let $T>0$ be arbitrarily fixed.
$ ( \p^N )_{N \in \N} $ is bounded in $ L^{\infty}(0, T; H^{\frac{\alpha}{2}}_{(m)}) $, hence also in $ L^2(0, T;   H^{\frac{\alpha}{2}}_{(m)}) $. Then there exists some $ \p' \in L^2(0, T; H^{\frac{\alpha}{2}}_{(m)}) $ such that $ \p^N \rightharpoonup \p' $ in $ L^2(0, T; H^{\frac{\alpha}{2}}_{(m)}) $.
Since $ \p^N \to \p $ in $ L^2(Q_T) $, $ \p = \p' $. Hence $\p^N \rightharpoonup \p$ in $L^2(0, T; H^{\frac{\alpha}{2}}_{(m)}) $. 

For any fixed $ \bpsi \in C_0^{\infty}(Q_T)^d $, 
$$ \int_{Q_T} \sqrt{h}~\nabla \p_N \cdot \bpsi \, d(x, t) = - \int_{Q_T} \sqrt{h}~ \p^N~\mathrm{div}~\bpsi \, d(x, t) $$
 tends to zero as $N \to \infty$ since
$ \p^N \to \p $ in $ L^2(Q_T) $. 
Since $ \sup_{N \in \N} \| \sqrt{h} \nabla \p^N \|_{L^2(Q_T)^d} < \infty $ and
$ \overline{C_0^{\infty}(Q_T)^d}^{~\|\cdot\|_{L^2(Q_T)^d}} = L^2(Q_T)^d $, we have 
$\sqrt{h} \nabla \p^N \rightharpoonup 0$ in $L^2(Q_T)^d$. Hence we have
$ (\p^N, \sqrt{h} \nabla \p^N) \rightharpoonup (\p, 0) $ in $ L^2(0, T; H^{\frac{\alpha}{2}}_{(m)} \times (L^2)^d) $. 

Because of \eqref{limitnorm}, we also have the convergence of the norms of $ (\p^N, \sqrt{h} \nabla \p^N) $ to that of $ (\p, 0)$ in $ L^2(0, T; H^{\frac{\alpha}{2}}_{(m)} \times (L^2)^d) $. Hence we have shown the claim.


\section*{Acknowledgments} 
The results of this contribution were mainly obtained during a research stay of the second author at the University of Regensburg, which was partly supported by the ``Universit\"atsstiftung Hans Vielberth''. 
The second author would like to thank Professor Mitsuru Sugimoto for offering him the support by JSPS KAKENHI Grant Numbers 26287022.
The second author was also supported by JSPS KAKENHI Grant Numbers 17K17804.
These supports are gratefully acknowledged.


\begin{thebibliography}{breitesteee}
        \bibitem[Abe07]{Abe07}Abels H., \textit{Diffuse Interface Models for Two-Phase Flows of Viscous 
          Incompressible Fluids}, Habilitation thesis, Leipzig 2007.
        \bibitem[Abe09a]{Abe09a}Abels H., \textit{Existence of weak solutions for a diffuse interface model for viscous, incompressible
          fluids with general densities}, Comm. Math. Phys., vol. 289 (2009), p.45-73.
        \bibitem[Abe09b]{Abe09b}Abels H., \textit{On a diffuse interface model for two-phase flows of viscous, incompressible fluids with
          matched densities}, Arch. Rat. Mech. Anal., vol. 194 (2009), p.463-506.
        \bibitem[Abe11]{LTModelShortTime}Abels H., \textit{Strong well-posedness of a  diffuse interface model for a viscous, 
          quasi-incompressible two-phase flow}, SIAM J. Math. Anal., vol. 44(1) (2012), p.316-340.
        \bibitem[ABG15]{ABG15}Abels H., Bosia S., Grasselli M.,
\textit{Cahn-Hilliard equation with nonlocal singular free energies}, Ann. Mat. Pura Appl. vol. 194(4) (2015), p.1071-1106.                
        \bibitem[ADG13]{ADG13}Abels H., Depner D., Garcke H.,
\textit{Existence of weak solutions for a diffuse interface model for two-phase flows of incompressible fluids with different densities}, J. Math. Fluid Mech. vol. 15(3) (2013), p.453-480.                        
         \bibitem[AGG11]{AGG11}Abels H., Garcke H., Gr\"un G., \textit{Thermodynamically consistent, frame indifferent diffuse interface 
         models for incompressible two-phase flows with different densities}, Math. Models Meth. Appl. Sci., vol. 22(3) (2012) 1150013, 40 pp.
        \bibitem[AK07]{AK07}Abels H., Kassmann M., \textit{An analytic approach to purely nonlocal Bellman equations arising in models of stochastic control}, J. Differ. Eqs. vol. 236 (2007), p.29-56.        
        \bibitem[AW07]{AW07}Abels H., Wilke M., \textit{Convergence to equilibrium for the Cahn-Hilliard equation with a 
          logarithmic free energy}, Nonlin. Anal., vol. 67 (2007), p.3176-3193.
        \bibitem[BBC03]{BBC03}Bogdan, K., Burdzy, K., Chen Z.-Q., \textit{Censored stable processes}, Probab. Theory Rel. Fields, vol. 127 (2003),  p.89-152.         
        \bibitem[CH58]{CahnHilliard}
           Cahn, J.W., Hilliard, J.E., 
           \textit{Free energy of a nonuniform system. {I}. {I}nterfacial energy},{ J. Chem. Phys.}, vol. 28(2) (1958), p.258-267.
        \bibitem[CFG12]{CFG12}Colli P., Frigeri M., Grasselli M. \textit{Global existence of weak solutions to a nonlocal Cahn-Hilliard-Navier-Stokes system}, J. Math. Anal. Appl. vol. 386 (2012), p.428-444.
        \bibitem[DSS07]{DSS07}Ding H., Spelt P.D.M., Shu C., \textit{Diffuse interface model for incompressible two-phase flows with 
           large density ratios}, J. Comp. Phys., vol. 22 (2007), p.2078-2095. 
        \bibitem[F15]{F15}Frigeri S., \textit{Global existence of weak solutions for two-phase flows of incompressible fluids with unmatched densities}, Math. Models Methods Appl. Sci. 26 (2016), p. 1955-1993.
        \bibitem[FG12a]{FG12a}Frigeri S., Grasselli M., \textit{Global and trajectories attractors for a nonlocal Cahn-Hilliard-Navier-Stokes-system}, J. Dynam. Differential Equations, vol. 24 (2012), p.827-856. 
        \bibitem[FG12b]{FG12b}Frigeri S., Grasselli M., \textit{Nonlocal Cahn-Hilliard-Navier-Stokes systems with singular potentials}, Dyn. Partial Differ. Equ., vol. 9 (2012), p.273-304.
        \bibitem[FGGS19]{FGGS19}  Frigeri, S.; Gal C. G., Grasselli M., Sprekels J., \textit{Two-dimensional nonlocal Cahn-Hilliard-Navier-Stokes systems with variable viscosity, degenerate mobility and singular potential}, Nonlinearity vol. 32(2) (2019), p.678–727.
        \bibitem[G14]{GG14}Gal, C.G., Grasselli, M., \textit{Longtime behavior of nonlocal Cahn-Hilliard equations}, Discrete Contin. Dyn. Syst. Ser. A, vol. 34 (2014), p.145-179.        
        \bibitem[GZ03]{GZ03}Gajewski, H., Zacharias, K. \textit{On a nonlocal phase separation model}, J. Math. Anal. Appl., vol. 286 (2003), p.11-31.                
        \bibitem[GL97]{GL97}Giacomin, G., Lebowitz, J.L., \textit{Phase segregation dynamics in particle systems with long range interations. {I}. {M}acroscopic limits}, J. Stat. Phys., vol. 87 (1997), p.37-61.
        \bibitem[GL98]{GL98}Giacomin, G., Lebowitz, J.L., \textit{Phase segregation dynamics in particle systems with long range interations. {II}.
            {I}nterface motion}, SIAM J. Appl. Math. vol. 58 (1998), p.1707-1729.
        \bibitem[GMT19]{GMT19} Giorgini A., Miranville A., Temam R., \textit{Uniqueness and regularity for the Navier-Stokes-Cahn-Hilliard system}, SIAM J. Appl. Math., vol. 51(3) (2019), p.2535-2574.
       
        \bibitem[GPV96]{GPV96}Gurtin M.E., Polignone D., Vi\~{n}als J., \textit{Two-phase binary fluids and immiscible fluids described
           by an order parameter}, Math. Models Meth. Appl. Sci., vol. 6(6)  (1996), p.815-831. 
        \bibitem[HH77]{HH77}Hohenberg P.C., Halperin B.I., \textit{Theory of dynamic critical phenomena},
           Rev. Mod. Phys., vol. 49 (1977), p.435-479. 
        \bibitem[LP11a]{LP11a}Londen, S.-O., Petzeltov\'{a}, H., \textit{Convergence of solutions of a non-local phase-field system}, Discret. Contin. Dyn. Syst. Ser. S, vol. 4 (2011), p.653-670.
        \bibitem[LP11b]{LP11b}Londen, S.-O., Petzeltov\'{a}, H., \textit{Regularity and separation from potential barriers for a non-local phase-field system}, J. Math. Anal. Appl., vol. 379 (2011), p.724-735.
        \bibitem[LT98]{LT98}Lowengrub J., Truskinovsky L., \textit{Quasi-incompressible Cahn-Hilliard fluids and topological 
           transitions}, R. Soc. Lond. Proc. Ser. A Math. Phys. Eng. Sci., vol. 454 (1998), p.2617-2654.
        \bibitem[RS96]{RS96} Runst T., Sickel W., \textit{Sobolev spaces of fractional order, {N}emytskij operators, and  nonlinear partial differential equations}, 
         De Gruyter Series in Nonlinear Analysis and Applications 3,  Walter de Gruyter \& Co., Berlin 1996.
        \bibitem[Soh01]{Soh01}
          Sohr H., \textit{The {N}avier-{S}tokes Equations}, Birkh\"auser Advanced Texts: Basler Lehrb\"ucher, Birkh\"auser Verlag, Basel, 2001.
        \bibitem[Zei92]{Zei92}Zeidler E., \textit{Nonlinear Functional Analysis and its Applications I}, Springer, New York, 1992.
\end{thebibliography}
\end{document}